\newtheorem{teorema}{Theorem}[section]
\newtheorem{lemmaa}[teorema]{Lemma}
\newtheorem{definizione}[teorema]{Definition}
\newtheorem{osservazione}[teorema]{Remark}
\newtheorem{corollario}[teorema]{Corollary}
\newtheorem{proposizione}[teorema]{Proposition}
\newcommand{\N}{\mathbb{N}}
\newcommand{\R}{\mathbb{R}}
\newcommand{\C}{\mathbb{C}}
\newcommand{\PP}{\mathbb{P}}
\newcommand{\Sym}{\emph{Sym}}
\newcommand{\E}{\mathbb{E}}
\title{\bfseries{The number of real eigenvectors of a real polynomial}}
\author{\bfseries{Mauro Maccioni}\thanks{This work has been partially supported by G.N.S.A.G.A. of INDAM and by MIUR}}
\date{29 November 2016}
\begin{document}
\maketitle
\begin{abstract}
I investigate on the number $t$ of real eigenvectors of a real symmetric tensor. In particular, given a homogeneous polynomial $f$ of degree $d$ in $3$ variables, I prove that $t$ is greater or equal than $2c+1$, if $d$ is odd, and $t$ is greater or equal than $\max(3,2c+1)$, if $d$ is even, where $c$ is the number of ovals in the zero locus of $f$. About binary forms, I prove that $t$ is greater or equal than the number of real roots of $f$. Moreover, the above inequalities are sharp for binary forms of any degree and for cubic and quartic ternary forms.
\end{abstract}

\section{Introduction} \label{sez:0}
Given a real homogeneous polynomial $f$ of degree $d$ in $n$ variables, its eigenvectors are $x\in\C^n$ such that $\nabla f(x)=\lambda x$, for some $\lambda\in\C$.\\
In an alternative way, the eigenvectors are the critical points of the Euclidean distance function from $f$ to the
Veronese variety of polynomials of rank one (see \cite{DHOST}).\\
In the quadratic case ($d=2$) the eigenvectors defined in this way coincide with the usual eigenvectors of the symmetric matrix associated
to $f$. By the Spectral Theorem, the eigenvectors of a quadratic polynomial are all real.
So a natural question is to investigate the reality of the eigenvectors of a polynomial $f$ of any degree $d$. The number of complex eigenvectors of a polynomial $f$ of degree $d$ in $n$ variables, when it is finite, is given by
\begin{equation} \label{eq:0}
\left\{
\begin{array}{cc}
((d-1)^n-1)/(d-2), & d\geq3  \\
(d-1)^{n-1}+(d-1)^{n-2}+...+(d-1)^0=n, & d=2
\end{array}
\right.
\end{equation}
The value in this formula has to be counted with multiplicities. The general polynomial has all eigenvectors of multiplicity one. Formula (\ref{eq:0}) appears in \cite{CSS} by Cartwright and Sturmfels.

Our picture is quite complete in the case $n=2$ of binary forms. We show that\\ \\
\textbf{Theorem 1}: \emph{The number of real eigenvectors of a real homogeneous polynomial in $2$ variables is greater or equal than the number of its real roots}.\\ \\
Moreover, we show that the inequality of Theorem 1 is sharp and it is the only essential constraint about the reality of eigenvectors, in the sense that the set of polynomials in $\Sym^d\R^2$ with exactly $k$ real roots contains subsets of positive volume consisting of polynomials with exactly $t$ real eigenvectors, for any $t$ such that $k\leq t\leq d$, $k\equiv t\equiv d\mod2$, $t\geq1$. The congruence mod 2 is an obvious necessary condition on the pair $(k,t)$ which comes from the conjugate action.
Note that all extreme cases are possible, so there are polynomials with maximum number $d$ of real eigenvectors, as well as polynomials with $1$ real eigenvector for odd $d$ (with only one real root by Theorem 1) and polynomials with $2$ real eigenvectors for even $d$ (with zero or two real roots by Theorem 1). There are no polynomials with zero real eigenvectors: this is due to the interpretation of eigenvectors as critical points of the Euclidean distance function, which always attains a real minimum.\\
We can restate the inequality of Theorem 1 by saying that the topological type of $f$ prescribes the possible cases for the number of real eigenvectors.

We investigate the next case $n=3$ of ternary forms. In this case the topological type of $f$ depends on the number of ovals in the real projective plane and on their mutual position (nested or not nested). Again we prove an inequality which follows the same philosophy of Theorem 1. Precisely we have\\ \\
\textbf{Theorem 2}: \emph{Let $t$ be the number of real eigenvectors of a real homogeneous polynomial in $3$ variables with $c$ ovals. Then $t\geq 2c+1$, if $d$ is odd, and $t\geq\max(3,2c+1)$, if $d$ is even}.\\ \\
We give evidence that the inequality of Theorem 2 is sharp, by showing that in the cases $d=3$ and $d=4$, the set of polynomials in $\Sym^d\R^3$
with exactly $c$ real ovals contains subsets of positive volume consisting of polynomials with exactly $t$ real eigenvectors, for any $t$ such that $t$ is odd and $2c+1\leq t\leq7$ ($d=3$) and $\max(3,2c+1)\leq t\leq13$ ($d=4$). Again the condition that $t$ is odd is a necessary condition which follows from the fact that the values in (\ref{eq:0}) are odd for $n=3$ (as for any odd $n$).

The structure of this paper is as follows:\\
In section \ref{sez:1} we give preliminaries and a general result (Lemma \ref{lemma:0}) on the nature of real eigenvectors of a real symmetric tensor.\\
In section \ref{sez:2} we investigate on binary forms. In primis, we give some examples where it is evident that there are some forbidden values for the number of real eigenvectors of a form conditioned to the number of its real roots. Then we give our main Theorem \ref{teo:2}, that shows that the number of real eigenvectors of a real homogeneous polynomial in $2$ variables is greater or equal than the number of its real roots and this inequality is sharp. Theorem 1 follows from this.\\
In section \ref{sez:3} we investigate on ternary forms. In primis, we give some computational examples of ternary cubics where it is evident that there are some forbidden values for $t$ conditioned to $c$, and that all numbers of real eigenvectors are possible for a cubic, according to our main Theorem \ref{teo:4}, that shows that $t$ is greater or equal than $2c+1$, if $d$ is odd, and $t$ is greater or equal than $\max(3,2c+1)$, if $d$ is even. Theorem 2 follows from this. Moreover, we find ternary forms of degree $d$ with a certain number $c$ of ovals, all with the maximum number of real eigenvectors.\\
In section \ref{sez:4} we give examples of cubics and quartics with the minimum and the maximum number of real eigenvectors in all possible topological cases, showing that for $d=3,4$ the inequality of Theorem \ref{teo:4} is again sharp (Propositions \ref{prop:99} and \ref{prop:100}).\\
In section \ref{sez:5}, we give some computational examples of ternary quartics and sextics with all possible values of $t$ conditioned to the value of $c$ in some topological cases.\\
The results in this paper are part of my doctoral thesis at the University of Firenze, with advisor Giorgio Ottaviani.

\section{Preliminaries} \label{sez:1}

\begin{definizione}(\cite{CSS},\cite{L},\cite{Q}) \label{def:1}
Let $x\in\C^n$ be and let $A=(a_{i_1,i_2,...,i_d})$ be a symmetric tensor of order $d$ and dimension $n$. We define $Ax^{d-1}$ to be the vector in $\C^n$ whose $j$-th coordinate is the scalar $$(Ax^{d-1})_j=\sum_{i_2=1}^{n}\cdots\sum_{i_d=1}^{n}a_{j,i_2,...,i_d}x_{i_2}\cdots x_{i_d}$$ Then, if $\lambda\in\C$ and $\tilde{x}\in\C^n\setminus\left\{0\right\}$ are elements such that $A{\tilde{x}}^{d-1}=\lambda\tilde{x}$, we say that $\lambda$ is an eigenvalue of $A$ and $\tilde{x}$ is an eigenvector of $A$.
\end{definizione}

\begin{osservazione}(\cite{CSS},\cite{EL}) \em \label{oss:1}
Consider $f(x)\equiv f(x_1,...,x_n)$ the homogeneous polynomial in $\C[x_1,...,x_n]$ of degree $d$ associated to the symmetric tensor $A$ by the relation $$ f(x_1,...,x_n)=A\cdot x^d=\sum_{i_1}^{n}\cdots\sum_{i_d}^{n}a_{i_1,i_2,...,i_d}x_{i_1}\cdots x_{i_d}=x\cdot Ax^{d-1}$$ Then $\tilde{x}\in\C^n$ is an eigenvector of $A$ with eigenvalue $\lambda\in\C$ if and only if $$\nabla f(\tilde{x})=\lambda\tilde{x}$$ Moreover, the eigenvectors of $A$ are precisely the fixed points of the projective map $$ \nabla f:\PP^{n-1}(\C)\longrightarrow\PP^{n-1}(\C), \; [x]\longmapsto[\nabla f(x)] $$ which is well-defined
provided that the hypersurface $\left\{f=0\right\}$ has no singular points. 
Finally, all previous characterizations are equivalent to saying that $\tilde{x}\in\C^n$ is an eigenvector of $A$ if and only if all the $2\times2$ minors of the $2\times n$ matrix
$$
\left(
\begin{array}{cccc}
f_{x_1}(\tilde{x}) & f_{x_2}(\tilde{x}) & \ldots & f_{x_n}(\tilde{x}) \\
x_1 & x_2 & \ldots & x_n
\end{array}
\right)
$$
vanish on $\tilde{x}$, or obviously that the vectors $\nabla f(\tilde{x})$ and $\tilde{x}$ are proportional.
\end{osservazione}



\begin{teorema} \label{teo:1}
Every symmetric tensor has at most
$$
\left\{
\begin{array}{cc}
((d-1)^n-1)/(d-2), & d\geq3  \\
(d-1)^{n-1}+(d-1)^{n-2}+...+(d-1)^0=n, & d=2
\end{array}
\right.
$$ 
distinct normalized eigenvalues. This bound is attained for generic symmetric tensors.
\end{teorema}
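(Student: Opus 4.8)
The plan is to realize the eigenvectors as the zero locus of a section of an explicit vector bundle on $\PP^{n-1}(\C)$ and then compute its top Chern class. By Remark \ref{oss:1}, $[x]$ is an eigenvector of $A$ exactly when $\nabla f(x)$ and $x$ are proportional. The $n$-tuple $(f_{x_1},\dots,f_{x_n})$ is a global section of $\mathcal{O}_{\PP^{n-1}}(d-1)^{\oplus n}$; twisting the Euler sequence
\[
0 \longrightarrow \mathcal{O}(d-2) \longrightarrow \mathcal{O}(d-1)^{\oplus n} \longrightarrow T_{\PP^{n-1}}(d-2) \longrightarrow 0
\]
and pushing $\nabla f$ forward along the surjection yields a section $\sigma_f \in H^0\bigl(T_{\PP^{n-1}}(d-2)\bigr)$. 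At a point $[x]$ the fibre of the subbundle $\mathcal{O}(d-2)$ is the line spanned by $(x_1,\dots,x_n)$ inside $\mathcal{O}(d-1)^{\oplus n}$, so $\sigma_f([x])=0$ if and only if $\nabla f(x)$ is a multiple of $x$; that is, the zero scheme $Z(\sigma_f)$ is precisely the eigenvector scheme of $A$.

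First I would treat the case when $Z(\sigma_f)$ is finite, which holds for generic $A$. Then the number of eigenvectors, counted with multiplicity, equals $\deg c_{n-1}\bigl(T_{\PP^{n-1}}(d-2)\bigr)$. From the Euler sequence $c(T_{\PP^{n-1}})=(1+h)^n$ in $A^{*}(\PP^{n-1})$, so the Chern roots $a_1,\dots,a_{n-1}$ satisfy $e_k(a)=\binom{n}{k}h^k$; twisting by $\mathcal{O}(d-2)$ shifts each root by $(d-2)h$, giving
\[
c_{n-1}\bigl(T_{\PP^{n-1}}(d-2)\bigr)=\prod_{i=1}^{n-1}\bigl(a_i+(d-2)h\bigr)=h^{n-1}\sum_{m=0}^{n-1}\binom{n}{m}(d-2)^{n-1-m}.
\]
Reindexing, the degree is $\sum_{i=0}^{n-1}\binom{n}{i+1}(d-2)^i$; expanding $(d-1)^n=((d-2)+1)^n$ shows this equals $((d-1)^n-1)/(d-2)$ when $d\ge 3$, and it equals $n$ when $d=2$. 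Equivalently, when $\{f=0\}$ is smooth, $\nabla f$ is a degree-$(d-1)$ endomorphism of $\PP^{n-1}$ acting by $(d-1)^i$ on $H^{2i}$, and the Lefschetz fixed point formula gives $\sum_{i=0}^{n-1}(d-1)^i$ fixed points.

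For the inequality in general, the number of distinct eigenvectors never exceeds the length of $Z(\sigma_f)$ when it is zero-dimensional, and by upper semicontinuity this length is at most $\deg c_{n-1}(T_{\PP^{n-1}}(d-2))$; the passage to distinct normalized eigenvalues uses that distinct eigenvectors carry distinct normalized eigenvalues for generic $A$, and that the normalized eigenvalues are the roots of the E-characteristic polynomial (of the same degree), which bounds their number even when the eigenvectors are infinitely many. Sharpness for generic $A$ follows from a Bertini-type argument: $T_{\PP^{n-1}}(d-2)$ is globally generated and the sections $\sigma_f$ span its fibres, so the generic $\sigma_f$ is transverse to the zero section, hence has reduced zero scheme of the expected length; a diagonal tensor provides an explicit witness.

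The main obstacle I expect is the bookkeeping needed to justify that $Z(\sigma_f)$ is the eigenvector scheme \emph{with the correct multiplicities}, and that the linear system $\{\sigma_f : f\in\Sym^d\C^n\}$ is base-point free and large enough inside $H^0(T_{\PP^{n-1}}(d-2))$ to force transversality for generic $f$; both reduce to the explicit coordinate description of $\nabla f$ together with the Euler relation $\sum_i x_i f_{x_i}=d\,f$, but they are the steps where care is required. Handling the genuinely degenerate case (singular $\{f=0\}$, positive-dimensional $Z(\sigma_f)$) is where the E-characteristic polynomial, rather than the bundle section, carries the argument.
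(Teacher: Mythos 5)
The paper does not actually prove this statement: immediately after Theorem \ref{teo:1} it attributes the result to Cartwright and Sturmfels (\cite{CSS}, Theorem 5.5) and to Fornaess--Sibony, so there is no in-paper argument to compare yours against. Your Chern-class route is nevertheless a standard and essentially correct way to obtain the count: the identification of the eigenvector scheme with the zero scheme of the induced section of $T_{\PP^{n-1}}(d-2)$ via the twisted Euler sequence is right, the computation $\deg c_{n-1}(T_{\PP^{n-1}}(d-2))=\sum_{i=0}^{n-1}(d-1)^i=((d-1)^n-1)/(d-2)$ is correct, and the global generation of $T_{\PP^{n-1}}(d-2)$ by the sections $\sigma_f$ (checkable by hand using gradients of monomials such as $x_jx_k^{d-1}$) does give generic transversality, hence sharpness of the eigenvector count. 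This differs from the Cartwright--Sturmfels proof, which works with resultants and a characteristic polynomial for the eigenvalues rather than with the eigenvector scheme; the Lefschetz fixed-point remark you make is closer to the Fornaess--Sibony viewpoint.

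The one place your sketch is genuinely thin is exactly the point you flag at the end: the theorem is stated for distinct \emph{normalized eigenvalues}, not eigenvectors. When $Z(\sigma_f)$ is finite your bound on eigenvectors dominates the number of eigenvalues and everything goes through, and for generic $A$ a short extra argument shows distinct eigenvectors carry distinct normalized eigenvalues, giving attainment. But when $Z(\sigma_f)$ is positive-dimensional the Chern-class count says nothing, and the entire burden falls on the assertion that the normalized eigenvalues are roots of a not-identically-zero polynomial of the right degree; that is precisely the content of the resultant construction in \cite{CSS}, so your proposal at that point is citing the theorem it is meant to prove rather than proving it. If you restrict your claim to the finite (generic) case, the argument is complete; for the full statement you need to actually construct the characteristic polynomial and show it does not vanish identically.
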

The above Theorem is a result by D. Cartwright and B. Sturmfels in \cite{CSS} (Theorem 5.5), although in \cite{ASS} it has been remarked that it was already known by Fornaess and Sibony (\cite{FS}) in the setting of dynamical systems.

Our goal is study the number of real eigenvectors of $f$, supposing that $\left\{f=0\right\}$ has a certain number of real connected components.

\begin{lemmaa} \label{lemma:0}
A vector $v\in\R^n$ is a real eigenvector of $f\in\Sym^d(\R^n)$ if and only if $v$ is a critical point of $f|_{S^{n-1}}$, where $S^{n-1}=\left\{x\in\R^n\, | \, \|x\|=1\right\}$.
\end{lemmaa}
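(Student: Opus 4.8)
The plan is to use the method of Lagrange multipliers for the constrained optimization problem of $f$ restricted to the unit sphere $S^{n-1}$. Since $S^{n-1}$ is the level set $g = 0$ of the smooth function $g(x) = \|x\|^2 - 1 = \sum_i x_i^2 - 1$, whose gradient $\nabla g(x) = 2x$ never vanishes on $S^{n-1}$, the sphere is a smooth submanifold of $\R^n$ and Lagrange's theorem applies without any regularity caveat. A point $v \in S^{n-1}$ is a critical point of $f|_{S^{n-1}}$ precisely when $\nabla f(v)$ is orthogonal to the tangent space $T_v S^{n-1}$, i.e. when $\nabla f(v)$ is parallel to $\nabla g(v) = 2v$; equivalently, there exists $\mu \in \R$ with $\nabla f(v) = \mu v$. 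Comparing with Remark \ref{oss:1}, this is exactly the condition that $v$ be a real eigenvector of $f$ (with real eigenvalue $\mu$, automatically real since $v \in \R^n \setminus \{0\}$ and $\nabla f(v) \in \R^n$).

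Concretely, I would introduce the Lagrangian $L(x,\mu) = f(x) - \tfrac{\mu}{2}\left(\|x\|^2 - 1\right)$ and observe that $\partial L/\partial x_j = f_{x_j}(x) - \mu x_j$ for each $j$, so that the stationarity condition $\nabla_x L = 0$ together with $\|x\| = 1$ reads $\nabla f(v) = \mu v$, $\|v\| = 1$. This gives one direction immediately: every critical point of $f|_{S^{n-1}}$ is a real eigenvector. For the converse, suppose $v \in \R^n$ is a real eigenvector, so $\nabla f(v) = \lambda v$ for some $\lambda \in \C$; since both $v$ and $\nabla f(v)$ have real entries and $v \neq 0$, in fact $\lambda \in \R$. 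A priori $v$ need not lie on $S^{n-1}$, but eigenvectors are defined projectively (Remark \ref{oss:1} works in $\PP^{n-1}$), and by Euler's identity $\nabla f$ is homogeneous of degree $d-1$, so rescaling $v \mapsto v/\|v\|$ sends an eigenvector to an eigenvector; hence we may assume $\|v\| = 1$, and then $\nabla f(v) = \lambda v$ with $\lambda \in \R$ and $\|v\|=1$ says precisely that $v$ is a critical point of $f|_{S^{n-1}}$.

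The only mildly delicate point — and the one I would state carefully rather than the crux of any difficulty — is the identification of "critical point of $f|_{S^{n-1}}$" with "$\nabla f(v) \perp T_v S^{n-1}$": this requires knowing $S^{n-1}$ is a manifold and that the constraint gradient is nonzero there, which is immediate, and the standard fact that for a submanifold cut out by a regular constraint the critical points of a restricted function are characterized by the Lagrange condition. Beyond this the argument is a direct translation between the analytic (Lagrange) and algebraic (proportionality of $\nabla f(v)$ and $v$) formulations, with no real obstacle; one should only be mindful to remark that the correspondence is between real eigenvectors up to scaling and critical points on the sphere, which is harmless since eigenvectors are projective objects.
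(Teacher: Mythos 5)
Your proposal is correct and follows essentially the same route as the paper: both identify the eigenvector equation $\nabla f(v)=\lambda v$ on the unit sphere with the Lagrange multiplier system for $f$ constrained to $g=\|x\|^2-1=0$. Your version is slightly more careful than the paper's (you note the regularity of the constraint, the reality of $\lambda$, and the rescaling of an eigenvector onto $S^{n-1}$ via homogeneity of $\nabla f$), but these are refinements of the same argument rather than a different approach.
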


\begin{proof}
By Remark \ref{oss:1}, finding (real) eigenvectors of $f$ is equivalent to finding (real) fixed points of the projective application $\nabla f$, or also to solving the system
$$
Sys_1=\left\{
\begin{array}{c}
f_{x_1}(x_1,x_2,\ldots,x_n)-\lambda x_1=0 \\
f_{x_2}(x_1,x_2,\ldots,x_n)-\lambda x_2=0 \\
\vdots \\
f_{x_n}(x_1,x_2,\ldots,x_n)-\lambda x_n=0
\end{array}
\right.
$$
with $\lambda\in\C$ ($\lambda\in\R$).\\
Consider the Lagrangian map 
$$L(x_1,x_2,\ldots,x_n,\lambda)=f(x_1,x_2,\ldots,x_n)-\lambda g(x_1,x_2,\ldots,x_n)$$
where $g(x_1,x_2,\ldots,x_n)=x_1^2+x_2^2+\ldots+x_n^2-1$. Then, the solutions of system
$$
Sys_2=\left\{
\begin{array}{c}
L_{x_1}(x_1,x_2,\ldots,x_n,\lambda)\equiv f_{x_1}(x_1,x_2,\ldots,x_n)-\lambda x_1=0 \\
L_{x_2}(x_1,x_2,\ldots,x_n,\lambda)\equiv f_{x_2}(x_1,x_2,\ldots,x_n)-\lambda x_2=0 \\
\vdots \\
L_{x_n}(x_1,x_2,\ldots,x_n,\lambda)\equiv f_{x_n}(x_1,x_2,\ldots,x_n)-\lambda x_n=0 \\
L_{\lambda}(x_1,x_2,\ldots,x_n,\lambda)\equiv g(x_1,x_2,\ldots,x_n)=0
\end{array}
\right.
$$
are all solutions of $Sys_1$; but solving $Sys_2$ gives critical points $v=(v_1,v_2,\ldots,v_n,\lambda_0)$ of $L$, that is critical points of $f|_{S^{n-1}}$ (it is the method of Lagrange multipliers), that is the solutions of the system\\ $Sys_3=\nabla(f|_{S^{n-1}})(x_1,x_2,\ldots,x_n)=(0,0,\ldots,0)$.
\end{proof}

\section{Binary forms} \label{sez:2}

Let $f\in\Sym^d(\R^2)$ be a binary form, that is a homogeneous polynomial of degree $d$ in two variables $x$, $y$. In this case, the question of the number of real eigenvectors of $f$ in relation with the number of real connected components of $\left\{f=0\right\}$ simply means that we must compare the real roots of $f$ with the real roots of the discriminant $yf_x-xf_y$ (also known as critical real roots of $f$) of the matrix  
$$
\left(
\begin{array}{cc}
f_{x}(x,y) & f_{y}(x,y) \\
x & y
\end{array}
\right).
$$

\begin{osservazione} \label{oss:binarie} \em
Let $f$ be a binary form. For a sample of $100000$ forms $f$ of degree $4$, $5$, where $$f=\sum_{i=0}^d\sqrt{\left(
\begin{array}{c}
d  \\
i
\end{array}
\right)}a_ix^{d-i}y^i, \; a_i\approx N(0,1)$$ and $N(0,1)$ is the normal distriubution of mean $0$ and variance $1$, we have estimated the probabilities for the variable $t$ conditioned to the values of $q$, where $q$ is the number of real roots of $f$ and $t$ is the number of real roots of $yf_x-xf_y$:
\begin{table}[H]
\begin{tabular}{||p{2cm}||*{3}{c|}|} 
\hline
$q$ & $t=0$ & $t=2$ & $t=4$ \\
\hline
\bfseries $4$ & $0$ & $0$ & $1$ \\
\hline
\bfseries $2$ & $0$ & $0.5160$ & $0.4840$ \\
\hline
\bfseries $0$ & $0$ & $0.3038$ & $0.6962$ \\
\hline 
\end{tabular}
\caption{$d=4$}
\end{table}
\begin{table}[H]
\begin{tabular}{||p{2cm}||*{3}{c|}|} 
\hline
$q$ & $t=1$ & $t=3$ & $t=5$ \\
\hline
\bfseries $5$ & $0$ & $0$ & $1$ \\
\hline
\bfseries $3$ & $0$ & $0.7186$ & $0.2814$ \\
\hline
\bfseries $1$ & $0.0516$ & $0.6234$ & $0.3250$ \\
\hline 
\end{tabular}
\caption{$d=5$}
\end{table} 
Hence, we note that there are some prohibited values of $t$ in relation to the value of $q$, that is we can guess that $q\leq t$ and this is the only constraint for $q\geq1$.\\
Again for a sample of $100000$ forms $f$, we have estimated the probabilities of the aleatory variables $X_f=(0,2,4)$ for $d=4$, $Y_f=(1,3,5)$ for $d=5$ and respectively $X_{yf_x-xf_y}=(0,2,4)$, $Y_{yf_x-xf_y}=(1,3,5)$ with respect to $f$ and $yf_x-xf_y$ and then relative expected values and we expect that $\E(X_f)\approx\sqrt{d}$ and $\E(X_{yf_x-xf_y})\approx\sqrt{3d-2}$ and the same for $\E(Y_f)$ and $\E(Y_{yf_x-xf_y})$ (see Example 1.6 in \cite{DH} and Example 4.8 in \cite{DHOST}):
\begin{table}[H]
\begin{tabular}{||p{3cm}||*{3}{c|}|} 
\hline
$X_f$ & $0$ & $2$ & $4$ \\
\hline
\bfseries $\approx$ probability & $0.1350$ & $0.7307$ & $0.1342$ \\
\hline
\end{tabular}
\caption{$d=4$}
\end{table}
\begin{table}[H]
\begin{tabular}{||p{3cm}||*{3}{c|}|}
\hline
$Y_f$ & $1$ & $3$ & $5$ \\
\hline
\bfseries $\approx$ probability & $0.4167$ & $0.5491$ & $0.0343$ \\
\hline
\end{tabular}
\caption{$d=5$}
\end{table}
whence $\E(X_f)=1.9984\approx\sqrt{4}=2$ and $\E(Y_f)=2.2352\approx\sqrt{5}$.
\begin{table}[H]
\begin{tabular}{||p{3cm}||*{3}{c|}|}
\hline
$X_{yf_x-xf_y}$ & $0$ & $2$ & $4$ \\
\hline
\bfseries $\approx$ probability & $0$ & $0.4190$ & $0.5810$ \\
\hline
\end{tabular}
\caption{$d=4$}
\end{table}
\begin{table}[H]
\begin{tabular}{||p{3cm}||*{3}{c|}|}
\hline
$Y_{yf_x-xf_y}$ & $1$ & $3$ & $5$ \\
\hline
\bfseries $\approx$ probability & $0.0224$ & $0.6569$ & $0.3207$ \\
\hline
\end{tabular}
\caption{$d=5$}
\end{table}
whence $\E(X_{yf_x-xf_y})=3.1620\approx\sqrt{10}$ and $\E(Y_{yf_x-xf_y})=3.5966\approx\sqrt{13}$.\\
We have the following
\end{osservazione}

\begin{teorema} \label{teo:2}
Let $f\in\Sym^d(\R^2)$ be, with $d\in\N$. Then $\max(\# real \; roots \; of \; f,1)\leq\# real \; eigenvectors \; of \; f$ and this relation is the only constraint for the number $q$ of real roots of $f$, in the sense that for any pair $(q,t)$ such that $q\equiv t\equiv d\mod2$ and $\max{(q,1)}\leq t\leq d$ the set $$\left\{f\in\Sym^d(\R^2)\,|\, \# real \; roots \; of \; f=q,\, \# real \; eigenvectors \; of \; f=t\right\}$$ has positive volume.
\end{teorema}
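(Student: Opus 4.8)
The plan is to split the statement into the inequality part and the sharpness (positive-volume) part. For the inequality, I would identify real eigenvectors of $f$ with critical points of $f|_{S^1}$ via Lemma \ref{lemma:0}, and then interpret both the real roots of $f$ and the critical points of $f|_{S^1}$ in terms of the $2\pi$-periodic function $\varphi(\theta)=f(\cos\theta,\sin\theta)$ on the circle. Since $f$ is homogeneous of degree $d$, a real root of $f$ (a real point $[a:b]$ of $\{f=0\}$ in $\PP^1$) corresponds to a pair of antipodal zeros of $\varphi$ on $S^1$; likewise a real eigenvector corresponds to a pair of antipodal critical points of $\varphi$. So after passing to $\PP^1(\R)\cong S^1/\{\pm 1\}$, I must show: the number of zeros of $\varphi$ on the circle is at most the number of its critical points. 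This is a Rolle-type argument: between two consecutive zeros of a smooth periodic function there must lie at least one critical point, and counting around the circle gives at least as many critical points as zeros (when zeros are simple; multiple roots are handled by a perturbation/limiting argument, or by noting a multiple root of $f$ is also a critical point of $\varphi$, since $\varphi$ and $\varphi'$ vanish there). The lower bound $t\ge 1$ for $q=0$ is immediate because $f|_{S^1}$, being continuous on a compact set, attains a maximum, which is a critical point (this is exactly the "Euclidean distance always attains a real minimum" remark from the introduction). The congruence $t\equiv d\bmod 2$ follows because the total count of complex eigenvectors in $\PP^1$ is $d$ by Theorem \ref{teo:1}, and non-real ones come in conjugate pairs.

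For the sharpness part I would construct, for each admissible pair $(q,t)$ with $q\equiv t\equiv d\bmod 2$ and $\max(q,1)\le t\le d$, an explicit form $f_0$ with exactly $q$ real roots and exactly $t$ real eigenvectors, all of which are \emph{simple} (nondegenerate critical points of $f_0|_{S^1}$ with distinct critical values, and simple roots). Then openness does the rest: having exactly $q$ simple real roots is an open condition (the real roots of $f$ and of a small perturbation are in bijection near $f_0$ by the implicit function theorem, and no new real roots appear from the complex ones since those are bounded away from $\R\PP^1$), and similarly having exactly $t$ nondegenerate real critical points of $f|_{S^1}$ is open; hence a whole neighbourhood of $f_0$ in $\Sym^d(\R^2)$ lies in the set, which therefore has positive volume. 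The natural way to build $f_0$: work on the circle and prescribe $\varphi_0(\theta)=f_0(\cos\theta,\sin\theta)$ to be (up to passing through the homogenization, i.e. choosing $\varphi_0$ in the linear span of $\{\cos^{d-i}\theta\sin^i\theta\}$) a trigonometric polynomial of the right degree with exactly $q$ antipodal-pairs of simple zeros and exactly $t$ antipodal-pairs of simple critical points. A clean model is $f_0=$ (product of $q$ distinct real linear forms) $\times$ (a positive definite form of degree $d-q$) chosen so that on $S^1$ one sees $q$ sign changes and, by tuning the positive factor (e.g. a high power of $x^2+y^2$ times a small generic perturbation, or a Chebyshev-like form), exactly the desired number of extra "bumps," producing $t$ critical pairs total. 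One should check the two extreme cases separately: $t=d$ (all eigenvectors real) is realized by a suitable real-rooted-derivative configuration — e.g. $f_0$ a product of $d$ linear forms in "sufficiently spread" directions, or a Chebyshev polynomial, so that $yf_x-xf_y$ has $d$ simple real roots; $t=\max(q,1)$ with $q<d$ is realized by taking $q$ linear factors and a positive definite complement close to $(x^2+y^2)^{(d-q)/2}$, for which $f|_{S^1}$ has no critical points beyond the $q$ forced ones (plus one forced extremum when $q=0$).

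The main obstacle I expect is the sharpness construction: exhibiting, for \emph{every} pair $(q,t)$ in the allowed range simultaneously, an $f_0$ that realizes $(q,t)$ with all roots and all eigenvectors simple. The inequality itself is a short Rolle/degree argument, and the openness step is routine once simplicity is arranged; but interpolating between the "few bumps" regime ($t$ close to $q$) and the "many bumps" regime ($t$ close to $d$) requires a flexible family of trigonometric polynomials on the circle whose zero count and critical-point count can be dialed independently within the constraints imposed by degree $d$ and the shared parity. I would handle this by a continuity/intermediate-value argument along a one-parameter family: start from a form with $q$ simple roots and the maximal number of real eigenvectors, then deform it (merging pairs of critical points, which happens generically through a single degenerate Morse critical point) to decrease $t$ by $2$ at a time down to $\max(q,1)$, checking at each stage that the count drops exactly by $2$ and that just before and just after the degeneration all data are simple — so that the open sets for each target $(q,t)$ are nonempty, hence of positive volume.
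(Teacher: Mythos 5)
Your proposal is correct and follows essentially the same route as the paper: the inequality is obtained by restricting $f$ to the circle and applying Rolle's theorem to $F(\theta)=f(\cos\theta,\sin\theta)$ (with the identification of real eigenvectors with critical points of $f|_{S^1}$ from Lemma \ref{lemma:0}), and the sharpness is obtained by building trigonometric polynomials on the circle with prescribed numbers of zero-pairs and critical-point pairs, padding by a power of $x^2+y^2$ to reach degree $d$, and invoking openness of the nondegenerate configurations. The paper realizes your one-parameter ``merge critical points two at a time'' deformation concretely via the family $\bigl(1+\tfrac{\cos 2\theta}{2}\bigr)+s(\cos t\theta+\sin t\theta)$ (times $\cos\theta$ for odd degree), tuning $s$; your openness discussion is in fact spelled out more explicitly than in the paper.
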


\begin{proof}
Let $q$ be the number of real roots of $f$. If $q=0$, the thesis follows immediately; therefore, consider $q\geq1$.\\ 
There are $q$ lines through the origin of $\R^2$ corresponding to the $q$ roots of $f$ and each of these lines meets the circle $x^2+y^2=1$ in two real points, that is in $2q$ total real points. Consider the following parametrization of the circle 
$$S^1:\left\{
\begin{array}{c}
x=\cos\theta \\
y=\sin\theta
\end{array}\right., \; \theta\in[0,2\pi) 
$$
and the function $F(\theta)=f(\cos\theta,\sin\theta)$, that is $F$ is the restriction of $f$ on $S^1$; evidently, the real roots of $F$ ($2q$) are twice the real roots of $f$ ($q$), or for each real root of $f$ in $\PP(\R^2)$, we have a uniquely determined pair of real roots of $F$. In particular, if for a given $\bar{\theta}$ we have $F(\bar{\theta})=0$, then $F(\bar{\theta}+\pi)=0$ and the line through the points $(\cos\bar{\theta},\sin\bar{\theta})$, $(\cos(\bar{\theta}+\pi),\sin(\bar{\theta}+\pi))=(-\cos\theta,-\sin\theta)$ corresponds to a real root of $f$ in $\PP(\R^2)$ and conversely. Now consider $F^{\prime}(\theta)=-\sin\theta f_x(\cos\theta,\sin\theta)+\cos\theta f_y(\cos\theta,\sin\theta)$. By Rolle's Theorem, between two real roots of $F$ there exists at least one real root of $F^{\prime}$ and then $F^{\prime}$ has at least $2q$ real roots. Consider $G(\theta)=g(\cos\theta,\sin\theta)$, where $g=-yf_x+xf_y$, that is $G$ is the restriction of the polynomial $g$ on $S^1$; then obviously $G(\theta)=F^{\prime}(\theta)$, hence $G$ has at least $2q$ real roots and therefore $g$ has at least $q$ real roots. We get $t\geq q$ as we wanted.\\
Finally, we must prove the following: $$ \forall n\in\N_0, \; \forall h\in\left\{h\in\N_0\,|\, h=2n\right\}, \; \exists f\in\Sym^d(\R^2) \; s.t. \; q=n, \; t=n+h$$ It is sufficient to consider binary forms of even degree $t$ as Fourier polynomials $$g(\cos\theta, \sin\theta)=\left(1+\frac{\cos(2\theta)}{2}\right)+s(\cos(t\theta)+\sin(t\theta))$$ and binary forms of odd degree as Fourier polynomials $$g(\cos\theta, \sin\theta)=\cos(\theta)\left(\left(1+\frac{\cos(2\theta)}{2}\right)+s(\cos(t\theta)+\sin(t\theta))\right)$$ where $s\in\R$. Then we can choose $s$ such that the corresponding Fourier polynomial $g$ of degree $t$ has $q$ real roots in $[0,\pi)$ and its derivative with respect to $\theta$ has exactly $t$ real roots in $[0,\pi)$ (see Figures \ref{fig:1}, \ref{fig:2}, \ref{fig:3}); hence, taking $f=g(x^2+y^2)^{\frac{d}{2}-\frac{t}{2}}$, we have a polynomial $f$ of degree $d$ with exactly $q$ real roots and $t$ real eigenvectors.
\end{proof}
\begin{figure}[H]
	\centering
		\includegraphics[width=10cm,height=4cm]{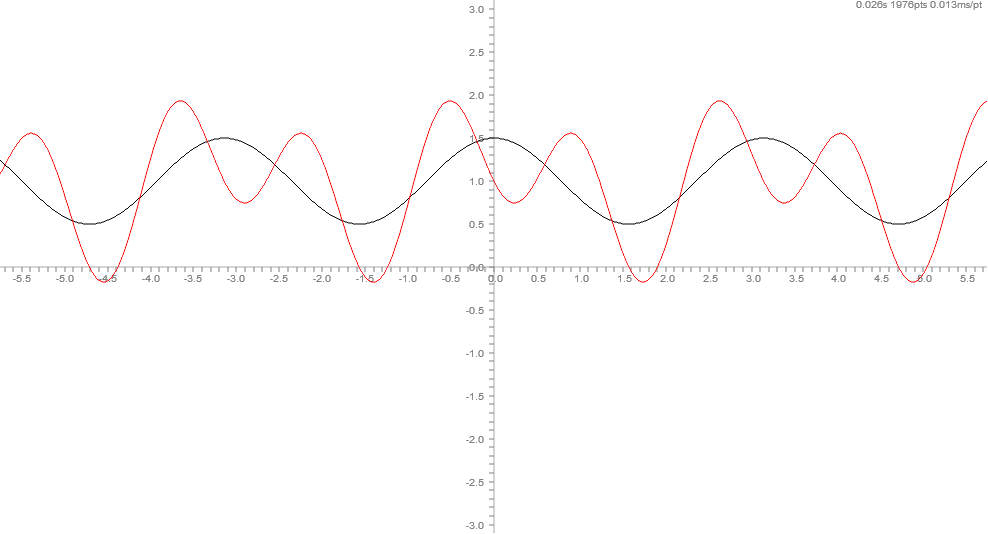}
		\caption{The two graphics of $g$ respectively for $s=0$ (the central one) and $s=-\frac{1}{2}$ (its perturbation). The second one has $q=2$ real roots and its derivative has $t=4$ real roots}
	\label{fig:1}
\end{figure}
\begin{figure}[H]
	\centering
		\includegraphics[width=10cm,height=6cm]{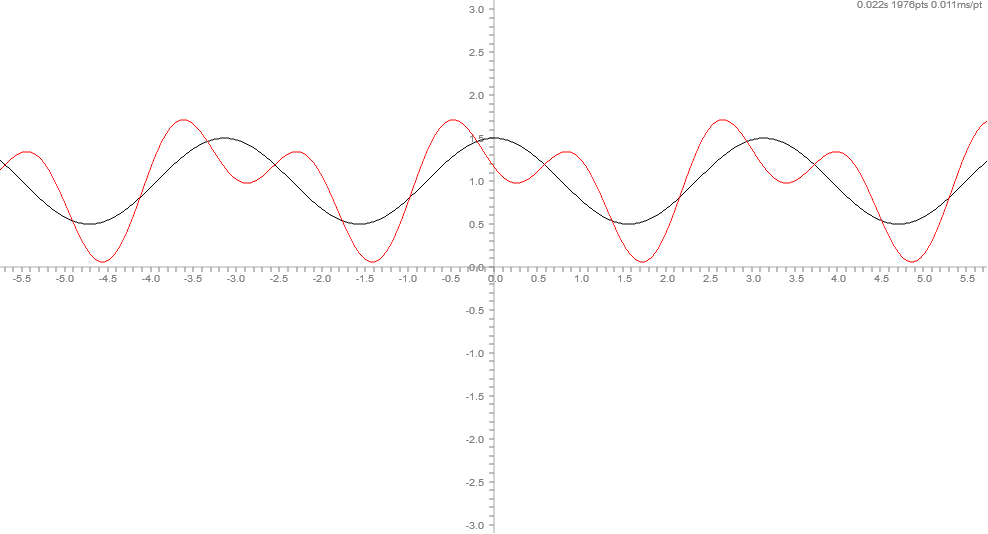}
		\caption{The two graphics of $g$ respectively for $s=0$ (the central one) and $s=-\frac{1}{3}$ (its perturbation). The second one has $q=0$ real roots and its derivative has $t=4$ real roots}
	\label{fig:2}
\end{figure}
\begin{figure}[H]
	\centering
		\includegraphics[width=10cm,height=6cm]{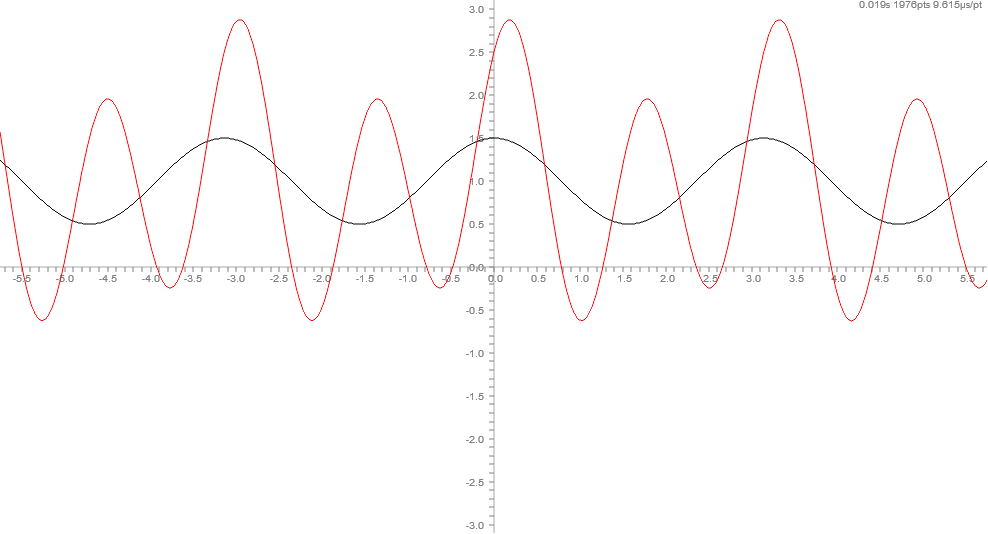}
		\caption{The two graphics of $g$ respectively for $s=0$ (the central one) and $s=2$ (its perturbation). The second one has $q=4$ real roots and its derivative has $t=4$ real roots}
	\label{fig:3}
\end{figure}

\begin{corollario} \label{cor:1}
If $f$ of degree $d$ has exactly $d$ real roots, then $f$ has exactly $d$ real eigenvectors.
\end{corollario}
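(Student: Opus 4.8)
The plan is to derive the corollary by a sandwiching argument, combining the lower bound just established in Theorem \ref{teo:2} with the universal upper bound of Theorem \ref{teo:1}. First I would observe that a binary form $f$ of degree $d$ with exactly $d$ real roots has all of its roots real and simple, hence is squarefree, so the zero locus $\{f=0\}$ is a set of $d$ distinct points in $\PP(\R^2)$ and in particular has no singular point; by Remark \ref{oss:1} the gradient map $\nabla f\colon\PP^1(\C)\to\PP^1(\C)$ is then a well-defined morphism, and it is non-constant (otherwise $f_x$ and $f_y$ would be proportional, which for squarefree $f$ would force a linear factor of $f$ to divide $f_x$, impossible), so its fixed locus --- the eigenvector set of $f$ --- is finite.

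With finiteness in hand, Theorem \ref{teo:1} (formula (\ref{eq:0}) specialized to $n=2$) bounds the number of eigenvectors of $f$ in $\PP^1(\C)$, counted with multiplicity, by $d$. On the other hand, Theorem \ref{teo:2} applied to $f$ with $q=d$ gives at least $\max(d,1)=d$ real eigenvectors. Comparing the two bounds forces $f$ to have exactly $d$ eigenvectors, all of them real (and in particular simple), which is the assertion.

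I do not anticipate a genuine obstacle here: the statement is essentially a one-line consequence of the two preceding theorems, and the only point worth spelling out is the verification that the eigenvector scheme is zero-dimensional so that the Cartwright--Sturmfels count of Theorem \ref{teo:1} may legitimately be invoked. One can even bypass that remark entirely: since $f$ has a real root it is not a scalar multiple of $(x^2+y^2)^{d/2}$, so the binary form $g=-yf_x+xf_y$ of degree $\le d$ --- whose real roots are precisely the real eigenvectors of $f$, as noted at the start of this section --- is not identically zero and hence has at most $d$ real roots; together with the inequality $t\ge d$ of Theorem \ref{teo:2} this again yields $t=d$.
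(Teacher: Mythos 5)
Your argument is correct and is exactly the derivation the paper intends (it states the corollary without proof, as the immediate consequence of the lower bound $t\geq q$ from Theorem \ref{teo:2} together with the upper bound $t\leq d$ coming from Theorem \ref{teo:1}, or equivalently from the fact that $yf_x-xf_y$ is a nonzero binary form of degree $d$). Your extra remarks on finiteness of the eigenvector locus, and the alternative route via the degree of $-yf_x+xf_y$, are sound and only make explicit what the paper leaves implicit.
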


Corollary \ref{cor:1} is found also in \cite{ASS} by H. Abo, A. Seigal and B. Sturmfels in Remark 6.7, as a consequence of Corollary 6.5. 





\section{Ternary forms} \label{sez:3}

\begin{osservazione} \label{oss:aggiuntiva23} \em
Let $f\in\Sym^d(\R^3)$ be a ternary form, that is $f$ is a homogeneous polynomial of degree $d$ in three variables $x$, $y$, $z$. Then $\left\{f=0\right\}$ has at most $\frac{(d-1)(d-2)}{2}+1$ real connected components in $\PP(\R^3)$ and, by Theorem \ref{teo:1}, $f$ has $((d-1)^3-1)/(d-2)=(d-1)^2+(d-1)+1$ distinct eigenvectors in the general case (note that the number $(d-1)^2+(d-1)+1$ is odd, $\forall\,d\in\N$). By Proposition 11.6.1 in \cite{CS}, if $d$ is odd, $\left\{f=0\right\}$ has a finite number $c+1$ of connected components in $\PP(\R^3)$, $c$ ovals and one pseudo-line. Then the complement $S^2\setminus\left\{f=0\right\}$ consists of $2c+2$ connected components (regions) which are symmetric in pairs. $f$ has constant sign on each region and the signs are opposite for symmetric regions. Again by Proposition 11.6.1 in \cite{CS}, if $d$ is even, $\left\{f=0\right\}$ has only a finite number $c$ of connected components in $\PP(\R^3)$, all ovals. Then the complement $S^2\setminus\left\{f=0\right\}$ consists of $2c+1$ connected components (regions), $2c$ of them are symmetric in pairs. Again $f$ has constant sign on each region and the sign is the same for symmetric regions.
\end{osservazione}

\begin{teorema}[Harnack's curve](\cite{CS}) \label{teo:har}
For any algebraic curve of degree $d$ in the real projective plane, the number of connected components $w$ is bounded by $$\frac{1-(-1)^d}{2}\leq w \leq\frac{(d-1)(d-2)}{2}+1$$ The maximum number is one more than the maximum genus of a curve of degree $d$ and it is attained when the curve is nonsingular. Moreover, any number of components in this range can be attained.
\end{teorema}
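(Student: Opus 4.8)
The plan is to prove the four assertions of the statement in turn---the lower bound, the upper bound (Harnack's inequality), the attainment of the maximum, and the realizability of every intermediate value---following Harnack's classical argument, using that $H_1(\PP^2(\R);\Z/2)\cong\Z/2$, in which a curve of degree $e$ represents the class $e\cdot[\text{line}]$, together with B\'ezout's theorem. For the lower bound there is nothing to do when $d$ is even; when $d$ is odd, I would restrict the defining form to an arbitrary real line, obtaining a binary form of odd degree, which has a real root, so the curve is nonempty and $w\ge 1=\tfrac{1-(-1)^d}{2}$.

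For the upper bound I would first reduce to the nonsingular case by a standard approximation argument, and then note that a nonsingular plane curve is irreducible (two distinct components of a plane curve must meet, and an intersection point would be singular). By Remark~\ref{oss:aggiuntiva23}, $C(\R)$ then consists of $w-1$ ovals plus one pseudo-line if $d$ is odd and of $w$ ovals if $d$ is even, each oval bounding a disk in $\PP^2(\R)$ while the pseudo-line carries the nonzero class of $H_1(\PP^2(\R);\Z/2)$. Assume for contradiction that $w\ge\tfrac{(d-1)(d-2)}{2}+2$. The space of ternary forms of degree $d-2$ has dimension $\binom{d}{2}=\tfrac{d(d-1)}{2}$, so there is a nonzero form $b$ of degree $d-2$ vanishing at any prescribed $\tfrac{d(d-1)}{2}-1$ points; I would take these to be one point on each of $w-1$ ovals of $C$ together with $m:=\tfrac{d(d-1)}{2}-w\ge 0$ further points on one of the remaining components of $C(\R)$, a total of $\tfrac{d(d-1)}{2}-1$ points. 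Since $C$ is irreducible of degree $d>d-2$ and $b\ne 0$, the curves $B=\{b=0\}$ and $C$ share no component, so B\'ezout gives at most $d(d-2)$ intersection points counted with multiplicity. Choosing $b$ suitably within the system of forms vanishing at the prescribed points so that $B$ meets $C(\R)$ transversally, the fact that each oval bounds a disk (Jordan curve theorem) forces $B$ to meet each oval it touches in an even, hence $\ge 2$, number of points, so $B$ meets $C$ in at least $2(w-1)+m$ real points. Then $2(w-1)+(\tfrac{d(d-1)}{2}-w)\le d(d-2)$, which rearranges to $w\le\tfrac{(d-1)(d-2)}{2}+1$, a contradiction. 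Since a nonsingular plane curve of degree $d$ has genus $\binom{d-1}{2}=\tfrac{(d-1)(d-2)}{2}$, which is the maximal genus in degree $d$, the bound is indeed one more than the maximal genus.

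For attainment I would reproduce Harnack's inductive construction: assuming a nonsingular $C_{d-2}$ of degree $d-2$ with the maximal number $\tfrac{(d-3)(d-4)}{2}+1$ of components, positioned so that a product $\ell_1\ell_2$ of two real linear forms meets $C_{d-2}$ in $2(d-2)$ real points along its ovals, I would perturb the nodal curve $\{f_{C_{d-2}}\ell_1\ell_2=0\}$ to $\{f_{C_{d-2}}\ell_1\ell_2+\varepsilon h=0\}$ with $h$ of degree $d$ not vanishing at the nodes and $\varepsilon\ne 0$ small, choosing the sign of $\varepsilon$ so that each node on $C_{d-2}$ is smoothed to split off a new oval; this yields $2(d-2)-1$ new components while keeping the old ones, and $\tfrac{(d-3)(d-4)}{2}+1+2(d-2)-1=\tfrac{(d-1)(d-2)}{2}+1$, with the perturbed curve nonsingular for generic small $\varepsilon$; the cases $d\le 2$ are trivial. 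For the intermediate values I would either start from $x^d+y^d+z^d=0$ (empty real locus, $w=0$, for even $d$) or a line ($w=1$, for odd $d$) and add one oval at a time by a small localized perturbation, or dually start from the maximal curve and remove one oval at a time by perturbing across the critical value of a local maximum of the defining function inside an innermost oval; alternatively one invokes Brusotti's theorem on independent smoothings of real nodes. Each small perturbation changes the component count by exactly one, so all values between $\tfrac{1-(-1)^d}{2}$ and $\tfrac{(d-1)(d-2)}{2}+1$ are realized.

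The hard part will be twofold. In the upper bound, one must genuinely justify that the auxiliary curve $B$ can be chosen transversal to $C$ while still passing through the prescribed points on the ovals, since it is precisely the ``even intersection with each oval'' that upgrades the crude B\'ezout count to the sharp bound. More substantially, the maximal construction requires a careful local analysis at each node of the reducible curve to guarantee that the small perturbation increases the number of real components by exactly the required amount and leaves the curve nonsingular; controlling the topology of the real locus under deformation of a reducible curve is the real obstacle. The lower bound and the realizability of the intermediate values are comparatively routine once that analysis is in hand.
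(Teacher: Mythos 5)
This is a classical theorem that the paper does not prove: it is quoted verbatim from the reference \cite{CS} (Bochnak--Coste--Roy), so there is no internal proof to compare against. Your sketch is essentially the standard textbook argument that the cited source contains --- the odd-degree restriction to a line for the lower bound, the B\'ezout count against an auxiliary curve of degree $d-2$ through points on the ovals for the upper bound, Harnack's inductive smoothing of a reducible curve for attainment, and Brusotti-type node smoothings for the intermediate values --- and the arithmetic in your B\'ezout count is right: $2(w-1)+\bigl(\tfrac{d(d-1)}{2}-w\bigr)\le d(d-2)$ does rearrange to Harnack's bound.

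Two soft spots are worth flagging. First, your choice of auxiliary points needs $m=\tfrac{d(d-1)}{2}-w\ge 0$, which is not guaranteed by the contradiction hypothesis alone; the standard fix is to place one point on each of exactly $\tfrac{(d-1)(d-2)}{2}+1$ ovals and $d-3$ extra points on one further component, which yields at least $(d-1)(d-2)+2+(d-3)=(d-1)^2>d(d-2)$ real intersections and works for every $w$ exceeding the bound. (Relatedly, you can dispense with the transversality worry entirely by counting intersections with multiplicity: the total multiplicity of $B$ along a null-homologous oval it meets is even, hence at least $2$, and B\'ezout is already a multiplicity count.) Second, ``reduce to the nonsingular case by a standard approximation argument'' is not innocuous: smoothing a node can \emph{merge} two real branches, so a small perturbation of a singular curve may have fewer components than the original, and the generic nearby smooth curve does not obviously dominate the singular one. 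The usual route for singular or reducible curves is instead to bound the components of each irreducible factor by the components of the real locus of its normalization, a smooth curve of genus at most $\tfrac{(e-1)(e-2)}{2}$ in degree $e$, and then sum over factors. With those two repairs your outline is the correct classical proof.
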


\begin{definizione} \label{def:M-curve}
A curve which attains the maximum number of real connected components is called an $M$-curve.
\end{definizione}

\begin{osservazione} \em \label{oss:2}
Given a sample of real ternary cubic forms $f$, we can compute eigenvectors of $f$ with Macaulay2, since the eigenvectors of $f$ are the solutions of the system associated to the ideal $I=(yf_x-xf_y,zf_y-yf_z,zf_x-xf_z)$ (Remark \ref{oss:1}), then we can compute them by the Eigenvectors Method (see Stickelberger's Theorem and Theorem 4.23 in \cite{ACO}).\\
For a sample of $1000$ real ternary cubic forms $f$, where $$f=\sum_{j_0+j_1+j_2=3}\sqrt{\left(
\begin{array}{c}
3  \\
j_0 \, j_1 \, j_2
\end{array}
\right)}a_{j_0j_1j_2}x_0^{j_0}x_1^{j_1}x_2^{j_2}, \; a_i\approx N(0,1)$$ with $c$ ovals, we have estimated the probabilities for the variable $t$ conditioned to variable $c$ in the following table:
\begin{table}[H]
\begin{tabular}{||p{2cm}||*{4}{c|}|} 
\hline
$t$ & $1$ & $3$ & $5$ & $7$ \\
\hline
\bfseries $c=1$ & $0$ & $0,026$ & $0,51$ & $0,464$ \\
\hline
\bfseries $c=0$ & $0,038$ & $0,186$ & $0,422$ & $0,354$ \\
\hline
\end{tabular}
\caption{$d=3$}
\label{table:1}
\end{table}
where $t$ is the number of real eigenvectors of $f$; given $\Delta(f)$ the discriminant of degree $12$ of $f$ (see Proposition 4.4.7 pag. 167, Example 4.5.3 pag. 171 and Formula (4.5.8) pag. 173 in \cite{BBS}), in particular, if $\Delta(f)>0$ then $f$ has two components ($c=1$), while if $\Delta(f)<0$ one ($c=0$).\\
Again for a sample of $1000$ ternary cubic forms $f$, we have estimated the probabilities of aleatory variables $X=(0,1)$, $Y=(1,3,5,7)$ and then their relative expected values and we expect that $\E(Y)\approx1+\frac{8}{7}\sqrt{14}\approx5,276$ (see \cite{DH}, the last Table in subsection 5.2):
\begin{table}[H]
\begin{tabular}{||p{3cm}||*{2}{c|}|}
\hline
$X$ & $0$ & $1$  \\
\hline
\bfseries $\approx$ probability & $0,735$ & $0,265$ \\
\hline
\end{tabular}
\caption{$d=3$}
\end{table}
whence $\E(X)=0,265$.
\begin{table}[H]
\begin{tabular}{||p{3cm}||*{4}{c|}|}
\hline
$Y$ & $1$ & $3$ & $5$ & $7$ \\
\hline
\bfseries $\approx$ probability & $0,028$ & $0,144$ & $0,445$ & $0,383$ \\
\hline
\end{tabular}
\caption{$d=3$}
\end{table}
whence $\E(Y)=5.366\approx5.276$.
We have the following result
\end{osservazione}

\begin{teorema} \label{teo:4}
Let $f$ be a ternary form of degree $d$ and suppose that $f$ has $c$ ovals. Then, if $d$ is odd, we have $2c+1\leq\# real \; eigenvectors \; of \; f$ and if $d$ is even, we have $\max{(2c+1,3)}\leq\# real \; eigenvectors \; of \; f$.
\end{teorema}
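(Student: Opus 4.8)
The plan is to exploit Lemma \ref{lemma:0}, which identifies the real eigenvectors of $f$ with the critical points of the restriction $f|_{S^2}$, and then to run a Morse-theoretic / intermediate-value argument on the complement $S^2\setminus\{f=0\}$ in the spirit of the binary proof of Theorem \ref{teo:2}. By Remark \ref{oss:aggiuntiva23}, when $d$ is odd the zero locus on $S^2$ separates the sphere into $2c+2$ regions (symmetric in pairs, with opposite signs on symmetric regions), and when $d$ is even it separates $S^2$ into $2c+1$ regions. On the sphere $S^2$, which is compact, the continuous function $f|_{S^2}$ attains a maximum and a minimum on the closure of each region; I would first argue that each of the $2c$ "oval-bounded" regions carries at least one interior critical point of $f|_{S^2}$, because on such a region $f$ vanishes on the entire boundary and is not identically zero in the interior, so an interior extremum (a genuine critical point of $f|_{S^2}$, since the boundary is not in the interior) must occur. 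This already produces $2c$ real eigenvectors. The remaining count comes from the complementary region(s): when $d$ is odd, there are two further regions where $f$ has constant nonzero sign and which are symmetric to each other, and each must contain an extremum of $f|_{S^2}$; since $f|_{S^2}$ is an antipodally (anti)symmetric function, I would pair up the extrema to obtain (projectively) at least one more eigenvector class, giving $2c+1$. When $d$ is even there is one extra region, but now the global maximum and the global minimum of $f|_{S^2}$ are both attained, and one checks they lie in distinct region-types or force enough critical points; combined with the symmetry and the parity constraint (the number of complex eigenvectors is always odd by Theorem \ref{teo:1}, hence $t$ is odd), this yields $t\geq\max(2c+1,3)$.

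More carefully, the key steps in order are: (1) reduce to counting critical points of $F:=f|_{S^2}$ via Lemma \ref{lemma:0}, being careful that a real eigenvector $v$ gives the projective class $\{v,-v\}$, so I must count critical points of $F$ on $S^2$ and divide by two at the end; (2) invoke Remark \ref{oss:aggiuntiva23} to enumerate the connected regions of $S^2\setminus\{f=0\}$ and record the sign pattern; (3) in each region whose boundary consists of ovals of $\{f=0\}$, argue that $F$ attains an interior extremum — here the crucial point is that such a region is a proper open subset of $S^2$ with nonempty boundary on which $F\equiv 0$, so by compactness of its closure an interior point realizes the max of $|F|$ over the closure, and that point is a critical point of $F$; (4) handle the "pseudo-line" regions (odd $d$) or the single extra region (even $d$) by a global-extremum argument plus the antipodal symmetry; (5) assemble the count, divide by $2$, and use the parity of the total number of eigenvectors to upgrade an even lower bound to the next odd integer when necessary, which is exactly what forces the $\max(3,2c+1)$ in the even case.

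The main obstacle I anticipate is step (3)–(4): making rigorous that *distinct* regions yield *distinct* (non-antipodal) critical points, and that the antipodal identification does not collapse the count below $2c+1$. Because symmetric regions have opposite signs when $d$ is odd, an interior extremum of $F$ in one region and its antipodal image in the symmetric region are genuinely the two lifts of a single projective eigenvector — so the $2c$ oval regions really only guarantee $c$ projective eigenvectors unless the regions are \emph{not} antipodally paired. I would therefore need to analyze the antipodal action on the set of regions: the pseudo-line (odd $d$) is antipodally invariant while ovals come in the description of Remark \ref{oss:aggiuntiva23} as bounding regions that are swapped in pairs, and a careful bookkeeping of how many region-orbits there are under the $\mathbb{Z}/2$ action is what ultimately produces the clean bound $2c+1$. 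A secondary subtlety is that $F$ need not be Morse, so "critical point" must be read as "point where $\nabla(f|_{S^2})=0$" without multiplicity claims; fortunately the statement only asserts a lower bound on the number of real eigenvectors, so counting distinct critical loci (each contributing at least one) suffices, and a small perturbation argument within $\Sym^d(\R^3)$ — keeping $c$ fixed and the critical points simple — can be used if one prefers to argue in the generic case first and then pass to the limit.
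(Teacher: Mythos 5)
Your reduction to critical points of $f|_{S^2}$ via Lemma \ref{lemma:0} and your use of Remark \ref{oss:aggiuntiva23} to place one extremum in each region match the paper's opening moves, but there is a genuine gap at exactly the point you flag as ``the main obstacle,'' and the fix you propose cannot close it. The $2c$ oval-bounded regions \emph{are} antipodally paired (they form $c$ orbits under the $\mathbb{Z}/2$ action), and the remaining region(s) contribute one more orbit; so no amount of bookkeeping of region-orbits gets you past $2c+2$ extrema on $S^2$, i.e.\ $c+1$ projective eigenvectors. Extrema alone will never produce $2c+1$. The missing idea is that the sphere's topology forces a comparable number of \emph{saddle} points: writing $C_0$, $C_1$, $C_2$ for the numbers of maxima, saddles and minima of $f|_{S^2}$, Morse's equation (equivalently Poincar\'e--Hopf) gives $C_0-C_1+C_2=\chi(S^2)=2$, and since the region count gives $C_0+C_2\geq 2c+2$, one deduces $C_1\geq 2c$. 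The total number of critical points on $S^2$ is therefore at least $(2c+2)+2c=4c+2$, which after antipodal identification yields the claimed $2c+1$ real eigenvectors. Your parity upgrade (total eigenvector count is odd) is used in the paper only for the even-degree, $c=0$ case, where Weierstrass gives a pair of maxima and a pair of minima, hence $2$ eigenvectors, upgraded to $3$; it is not what produces the jump from $c+1$ to $2c+1$ in general.

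A secondary remark: your worry that $F$ need not be Morse is legitimate and applies equally to the paper's invocation of the Morse equation; it can be handled by the Poincar\'e--Hopf index count (as noted in Remark \ref{oss:milnor}) or by a perturbation preserving $c$, but this is a refinement, not the core issue. As written, your argument proves only $t\geq c+1$ (and $t\geq 3$ for even $d$), which is strictly weaker than the statement whenever $c\geq 1$.
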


\begin{proof}
By Lemma \ref{lemma:0}, finding real eigenvectors of $f$ means finding classes $[(x_0,y_0,z_0)]\in\PP(\R^3)$ such that $(x_0,y_0,z_0)\in S^2$ is a critical point of $f$ on the sphere, that is a maximum, minimum or saddle point of $f$ on $S^2$. By Remark \ref{oss:aggiuntiva23}, we have that the complement $S^2\setminus\left\{f=0\right\}$ is divided at least into $2c$ pairs of symmetric regions, in which $f$ has constant sign and $f$ attains a non zero maximum inside any region where $f$ is positive, and a non zero minimum inside any region where $f$ is negative. Then, for any non zero maximum $v$ there is an antipodal $-v$ which is a non zero minimum if $f$ has odd degree, while for any non zero maximum (minimum) $v$ there is an antipodal $-v$ which is a non zero maximum (minimum) if $f$ has even degree; in conclusion, we have at least $2c$ critical points on the sphere corresponding to maxima or minima of $f$ and then $f$ has at least $c$ real eigenvectors. Consider now the following situations:
\begin{enumerate}
	\item $f\in\Sym^d(\R^3)$, $d$ odd. In this case, by Remark \ref{oss:aggiuntiva23} there are $2c+2$ regions on the sphere, then $2c+2$ total maxima and minima and hence $f$ has at least $c+1$ real eigenvectors.
	\item $f\in\Sym^d(\R^3)$, $d$ even. In this case, by Remark \ref{oss:aggiuntiva23} there are $2c+1$ regions on the sphere, then $2c+2$ total maxima and minima and hence $f$ has at least $c$ real eigenvectors and at least another one, given by a non zero maximum (minimum) $v$ and by its antipodal $-v$ which is a non zero maximum (minimum) of $f$ in the internal of the complement on $S^2$ of the union of all other $2c$ symmetric regions, that is $f$ has at least $c+1$ real eigenvectors.
\end{enumerate}
We must consider also the saddle points of $f$ on $S^2$. By Morse's equation (see Theorem 5.2 pag. 29 in \cite{JWM2}) \begin{equation}\label{eq:1} \sum_{\gamma}(-1)^{\gamma}C_{\gamma}=\chi(S^2) \end{equation} where $\gamma\in\left\{0,1,2\right\}$ is the index of critical points of $f$ on $S^2$ (respectively, we have a maximum, saddle or minimum point if $\gamma$ is $0$, $1$ or $2$), $C_{\gamma}$ is the number of critical points with index $\gamma$ of $f|_{S^2}$ and $\chi(S^2)=2$ is the Euler's characteristic of $S^2$, we have the following equation: $$C_0-C_1+C_2=2$$ 
We have seen that if $f$ has $c$ ovals we have at least $2c+2$ total maxima and minima of $f$ on $S^2$ and then $$C_0+C_2=C_1+2\geq 2c+2\Longrightarrow C_1\geq2c$$
Hence, the total number of critical points of $f$ on the sphere is at least $2c+2+2c=4c+2$ and then $f$ has at least $2c+1$ real eigenvectors.\\
Finally, note that if $d$ is even and if $c=0$, by Weierstrass's Theorem we have that $f$ attains at least a pair of absolute maxima and a pair of absolute minima on $S^2$, then $f$ has at least $2$ real eigenvectors, hence $3$ because the total number of eigenvectors of $f$ is always odd and therefore, if $d$ is even, $f$ has at least $\max\left\{2c+1,3\right\}$ real eigenvectors.
\end{proof}

\begin{osservazione} \label{oss:milnor} \em
Equation (\ref{eq:1}) can be seen in an equivalent way as a consequence of Poincaré-Hopf's Theorem as in \cite{JWM}, pag. 35.
\end{osservazione}

\begin{corollario} \label{cor:cubiche}
Consider $f\in\Sym^3(\R^3)$. Then, according to Remark \ref{oss:2}, if $f$ has two components it has at least three real eigenvectors (see Figure \ref{fig:79}).
\end{corollario}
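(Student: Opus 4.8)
The plan is to reduce the statement to Theorem \ref{teo:4} by first translating the hypothesis ``$f$ has two components'' into a statement about the number of ovals $c$. Since $d=3$ is odd, Remark \ref{oss:aggiuntiva23} (based on Proposition 11.6.1 in \cite{CS}) tells us that the real zero locus $\left\{f=0\right\}\subset\PP(\R^3)$ of a nonsingular cubic consists of $c+1$ connected components, namely $c$ ovals together with exactly one pseudo-line. Hence the assumption that $\left\{f=0\right\}$ has two connected components forces $c+1=2$, i.e. $c=1$; note this is consistent with Harnack's bound (Theorem \ref{teo:har}), which gives at most $\frac{(d-1)(d-2)}{2}+1=2$ components for $d=3$, so such an $f$ is an $M$-curve.

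With $c=1$ in hand, I would simply invoke Theorem \ref{teo:4} in the odd-degree case: it yields $2c+1\leq\#\,\text{real eigenvectors of }f$, and substituting $c=1$ gives $3\leq\#\,\text{real eigenvectors of }f$, which is exactly the claim. For completeness I would recall, as in the discussion preceding the corollary (Remark \ref{oss:2}), that whether $f$ has two components or one can be read off from the sign of the discriminant $\Delta(f)$: $\Delta(f)>0$ corresponds to $c=1$ (two components) and $\Delta(f)<0$ to $c=0$ (one component), so the hypothesis is an open condition and Figure \ref{fig:79} illustrates a representative case.

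I do not expect a genuine obstacle here, since the corollary is a direct specialization of Theorem \ref{teo:4}; the only point requiring a word of care is the implicit nonsingularity (smoothness) of $f$, which is needed both for the projective map $\nabla f$ of Remark \ref{oss:1} to be well defined and for the component count of Remark \ref{oss:aggiuntiva23} to apply. This is automatic on the open dense locus where $\Delta(f)\neq 0$, which is precisely the setting of the statement, so the argument goes through without further work.
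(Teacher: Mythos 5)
Your proposal is correct and is exactly the argument the paper intends: the corollary is an immediate specialization of Theorem \ref{teo:4}, since for a smooth cubic ($d=3$ odd) having two components means one pseudo-line plus $c=1$ oval, whence $t\geq 2c+1=3$. The paper offers no separate proof precisely because this is the whole content, and your added remarks on the discriminant and smoothness are consistent with Remark \ref{oss:2}.
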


\begin{figure}[H]
	\centering
		\includegraphics[width=10cm,height=4cm]{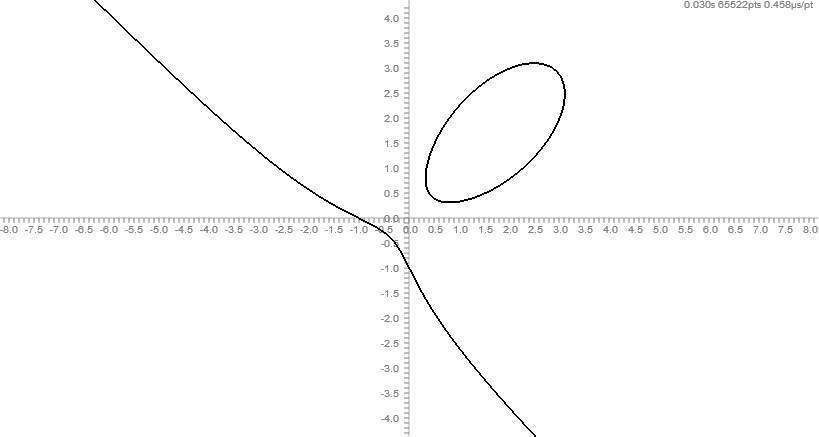}
		\caption{$x^3+y^3+1+6\lambda xy=0$, $\lambda>-\frac{1}{2}$}
	\label{fig:79}
\end{figure}

\begin{osservazione} \label{oss:3} \em
For an $M$-curve we have the following:
\begin{enumerate}
	\item $f\in\Sym^d(\R^3)$, $d$ odd. In this case, by Theorem \ref{teo:har} we have that an $M$-curve has $\frac{(d-1)(d-2)}{2}+1$ components, $\frac{(d-1)(d-2)}{2}$ ovals and one pseudo-line and then by Theorem \ref{teo:4} $f$ has at least $(d-1)(d-2)+1=d^2-3d+3$ real eigenvectors.
	\item $f\in\Sym^d(\R^3)$, $d$ even. In this case, by Theorem \ref{teo:har} we have that an $M$-curve has $\frac{(d-1)(d-2)}{2}+1$ components, all ovals and then by Theorem \ref{teo:4} $f$ has at least $(d-1)(d-2)+3=d^2-3d+5$ real distinct eigenvectors.
\end{enumerate}
\end{osservazione}

\begin{osservazione} \label{oss:connessione} \em
Having fixed the topological type of a form $f\in\Sym^d(\R^3)$, $d=3,4$, i.e. having fixed the number $c$ of ovals of $f$, the set of all forms such that they have the same number $c$ of $f$ is connected (see Theorem 1.7 in \cite{CO}).
\end{osservazione}

\begin{osservazione} \label{oss:drette} \em
Consider a form $f\in\Sym^d(\R^3)$ such that $f=l_1l_2\cdots l_d$, where $l_i$ are linear ternary forms, that is $f$ is a singular form of degree $d$ such that its real locus of zeros consists of $d$ lines in $\R^2$. If we choose all $l_i$ such that $\forall i:1,\ldots, d$ the set $\left\{l_i=0\right\}\cap\left(\cup_{i\neq j}\left\{l_j=0\right\}\right)$ consists of $d-1$ distinct points $P_{i,j}$ in $\R^2$, i.e. each line meets all the others in $d-1$ distinct points, $f$ has always the maximum number $t$ of real eigenvectors with multiplicity $1$. Then, we can perturb $f$ by $\epsilon g$, $g\in\Sym^d\R^3$, $\epsilon\in\R_+$ small enough and obtain a nonsingular quartic, smooth in $P_{i,j}$ depending on the sign of $g$ in $P_{i,j}$, with the maximum $t$. These results are in \cite{ASS}, precisely see Theorem 6.1 and Corollary 6.2.
\end{osservazione}

\section{The inequalities of Theorem \ref{teo:4} are sharp for ternary cubics and quartics} \label{sez:4}

\begin{proposizione} \label{prop:99}
Let $c\in\left\{0,1\right\}$ and let $t$ be odd such that $2c+1\leq t\leq7$. Then the set $$\left\{f\in\Sym^3(\R^3)\,|\, f \; has \; c \; ovals,\, \# real \; eigenvectors \; of \; f=t\right\}$$ has positive volume.
\end{proposizione}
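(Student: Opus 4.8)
The admissible pairs are quickly enumerated: a ternary cubic has at most $((3-1)^{3}-1)/(3-2)=7$ complex eigenvectors (Theorem \ref{teo:1}), this number is always odd, and Theorem \ref{teo:4} forces $t\geq 2c+1$; so the content of the Proposition is that exactly the seven pairs $(c,t)\in\{(0,1),(0,3),(0,5),(0,7),(1,3),(1,5),(1,7)\}$ are realized on sets of positive volume. The plan is: first prove that $c$ and $t$ are locally constant on a suitable dense open set $U\subseteq\Sym^3(\R^3)$, so that each single witness in $U$ already provides a set of positive volume; and then produce one witness per pair. Using that the set of cubics with a prescribed number of ovals is connected (Remark \ref{oss:connessione}), together with the fact that along a generic path $t$ changes by exactly $\pm2$ at each transversal crossing of the ``eigendiscriminant'' wall, it is in fact enough to realize the extreme values $t=2c+1$ and $t=7$, i.e. the four pairs $(0,1),(0,7),(1,3),(1,7)$, the intermediate values then following by an intermediate-value argument.

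For the openness step, let $U$ be the set of $f\in\Sym^3(\R^3)$ such that $\{f=0\}$ is smooth over $\C$ (equivalently, the degree-$12$ discriminant $\Delta(f)$ of Remark \ref{oss:2} is nonzero) and such that the eigenscheme $V(yf_x-xf_y,\,zf_x-xf_z,\,zf_y-yf_z)$ of Remark \ref{oss:1} is reduced, hence consists of $7$ distinct points of $\PP^2(\C)$. By Theorem \ref{teo:1} the set $U$ is Zariski-open and dense, hence of full Lebesgue measure. On the larger set $\{\Delta\neq0\}$ the number $c$ of ovals is locally constant: the family of real curves $\{f=0\}\subset\PP^2(\R)$ over $\{\Delta\neq0\}$ is a locally trivial fibration, so the number of connected components cannot change without acquiring a singularity; in particular $c$ is locally constant on $U$. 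Also $t$ is locally constant on $U$, since the $7$ eigenvectors depend continuously on $f$, the non-real ones occurring in complex-conjugate pairs, so that $t$ can change only when two eigenvectors collide, which happens outside $U$. Hence every $f_0\in U$ has a Euclidean-open neighbourhood --- a subset of $\Sym^3(\R^3)\cong\R^{10}$ of positive volume --- all of whose members have the same $c$ and the same $t$.

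It remains to exhibit witnesses. The two pairs with $t=7$ follow structurally from Remark \ref{oss:drette}: for three real lines $l_1,l_2,l_3$ in general position the product $l_1l_2l_3$ has $7$ simple real eigenvectors, and a sufficiently small perturbation $l_1l_2l_3+\epsilon g$ smooths its three nodes --- landing in $U$ --- while keeping the $7$ real simple eigenvectors; choosing the signs of $g$ at the three nodes realizes either of the two smoothings of the triangle, giving $c=0$ in one case and $c=1$ in the other, hence the pairs $(0,7)$ and $(1,7)$. For the two extreme pairs $(0,1)$ and $(1,3)$ one exhibits explicit cubics --- conveniently, members of one-parameter families deforming a reducible cubic --- and checks at a concrete rational choice of parameters that the form lies in $U$, that $\Delta(f)$ has the sign prescribing the desired $c$, and that exactly $t$ of the $7$ eigenvectors are real, via Stickelberger's theorem and the Eigenvectors Method exactly as in Remark \ref{oss:2}; such forms exist by Table \ref{table:1}, and a $(1,3)$ witness shows that the bound of Corollary \ref{cor:cubiche} is sharp. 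The remaining pairs $(0,3),(0,5),(1,5)$ are then obtained either by similar explicit examples or, more economically, by joining a $t=2c+1$ witness to a $t=7$ witness inside the connected set of cubics with $c$ ovals along a path transversal to the eigendiscriminant, on which $t$ runs through all the intermediate odd values.

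The main obstacle is the construction and, above all, the rigorous certification of the two extreme low-count witnesses $(0,1)$ and $(1,3)$. By Table \ref{table:1} these occur with probability only $\approx0.038$ and $\approx0.026$, so they are not produced by naive random sampling: the one-parameter families must be designed so that the real eigenvector count genuinely drops to the theoretical minimum while the curve keeps its prescribed topology. And certifying --- rather than merely observing --- the two counts at the chosen rational parameters requires computing $\Delta(f)$ and the resultant (or discriminant) of the eigenscheme ideal over $\Q$ and counting their real roots exactly, e.g. by Sturm sequences or Hermite's quadratic form. Once one certified witness per extreme pair is available, the openness step together with the intermediate-value remark completes the proof.
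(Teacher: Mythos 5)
Your proposal follows essentially the same route as the paper: reduce to exhibiting, for each $c\in\{0,1\}$, one cubic with the maximal count $t=7$ (obtained by perturbing a product of three general lines as in Remark \ref{oss:drette}) and one with the minimal count $t=2c+1$, then invoke the connectedness of the locus of cubics with $c$ ovals (Remark \ref{oss:connessione}) and the parity of $t$ to sweep out the intermediate values, with positive volume coming from local constancy of $(c,t)$ on the open set where the curve is smooth and the seven eigenvectors are simple. The only substantive difference is that you describe, but do not actually exhibit, the low-count witnesses for $(c,t)=(0,1)$ and $(1,3)$ --- the paper supplies these concretely as the Weierstrass cubics $y^2z-x^3-\frac{1}{9}x^2z-xz^2-z^3$ and $y^2z-\frac{2}{100}x^3+\frac{45}{100}x^2z+\frac{303}{100}xz^2+\frac{29}{100}z^3$, certified computationally --- while, on the other hand, you spell out the local-constancy/positive-volume step more carefully than the paper does.
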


\begin{proof}
By Remark \ref{oss:connessione}, we must show examples of ternary cubic forms such that $c\in\left\{0,1\right\}$ and $t$ attains the maximum and the minimum value. We have the following examples:
\begin{itemize}
	\item $t$ maximum. By Remark \ref{oss:drette}, we can take $f=xy(x+y+1)$, $\epsilon=\frac{1}{1000}$, $g_1=x^3+y^3-2$ and $g_2=-x^3-y^3+2$ to obtain $f_1=f+\epsilon g_1$ and $f_2=f+\epsilon g_2$ with, respectively, $1$ and $0$ ovals and $7$ real eigenvectors (see Figures \ref{fig:92}, \ref{fig:93}, \ref{fig:94}).
	\item $t$ minimum. Then we have:
\begin{itemize}
	\item $f$ has $0$ ovals. In this case, we can find the Weierstrass form \onehalfspacing{$f=y^2-x^3-\frac{1}{9}x^2-x-1$} (see Figure \ref{fig:95}) with $1$ real eigenvector.
	\item $f$ has $1$ oval. In this case, we can find the Weierstrass form \onehalfspacing{$f=y^2-\frac{2}{100}x^3+\frac{45}{100}x^2+\frac{303}{100}x+\frac{29}{100}$} (see Figure \ref{fig:96}) with $3$ real eigenvectors.
\end{itemize}
\end{itemize}
\end{proof}

\begin{figure}[H]
	\centering
		\includegraphics[width=10cm,height=4cm]{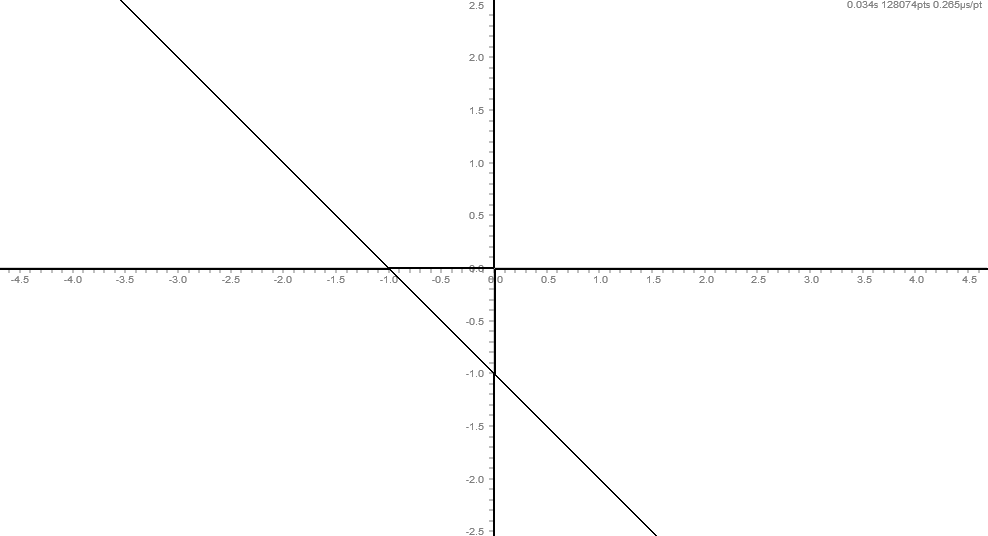}
		\caption{$d=3$, $f=xy(x+y+1)$}
	\label{fig:92}
\end{figure}	
\begin{figure}[H]
	\centering
		\includegraphics[width=10cm,height=4cm]{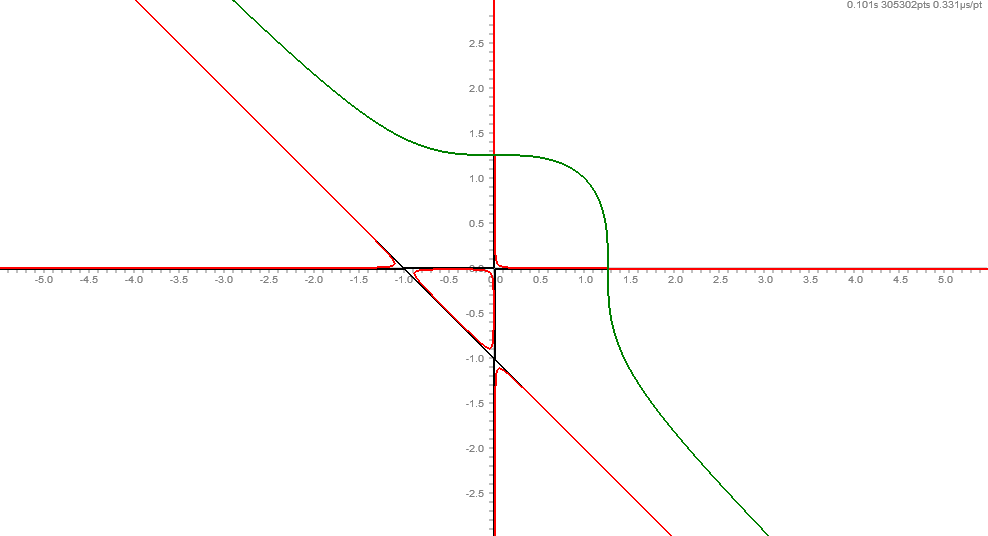}
		\caption{$d=3$, $f=xy(x+y+1)$, $g_1=x^3+y^3-2$ which is negative on the three singular points of $f$, $f_1=f+\frac{1}{1000}g_1$ which has $1$ oval}
	\label{fig:93}
\end{figure}
\begin{figure}[H]
	\centering
		\includegraphics[width=10cm,height=6cm]{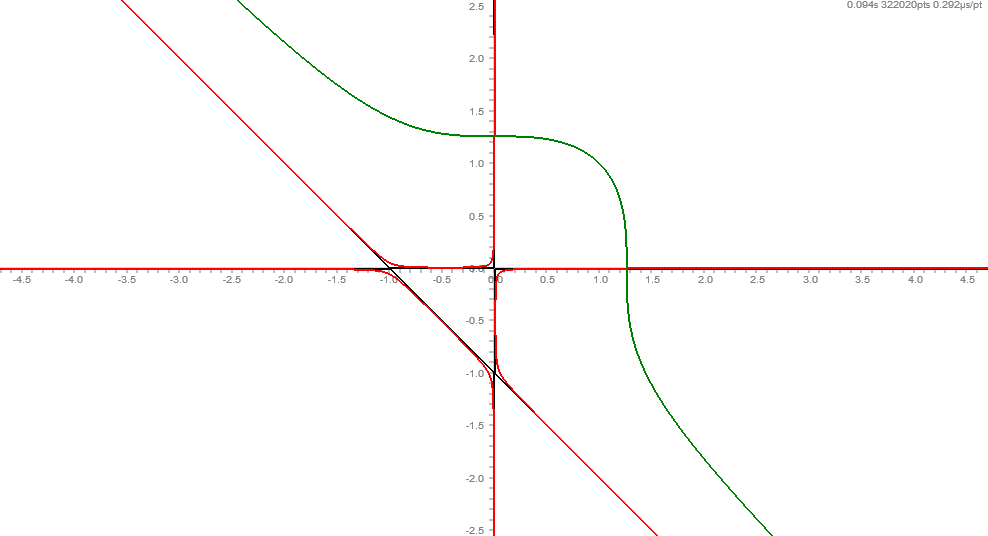}
		\caption{$d=3$, $f=xy(x+y+1)$, $g_2=-x^3-y^3+2$ which is positive on the three singular points of $f$, $f_2=f+\frac{1}{1000}g_2$ which has $0$ ovals}
	\label{fig:94}
\end{figure}
\begin{figure}[H]
	\centering
		\includegraphics[width=10cm,height=6cm]{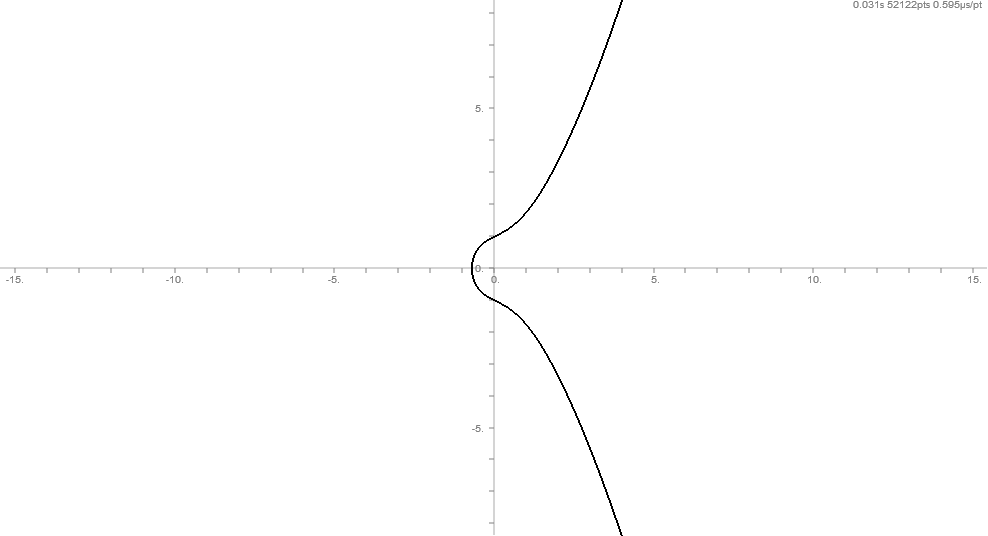}
		\caption{$d=3$, $f=y^2-x^3-\frac{1}{9}x^2-x-1$ which has $c=0$ ovals and $t=1$ real eigenvector}
	\label{fig:95}
\end{figure}
\begin{figure}[H]
	\centering
		\includegraphics[width=10cm,height=6cm]{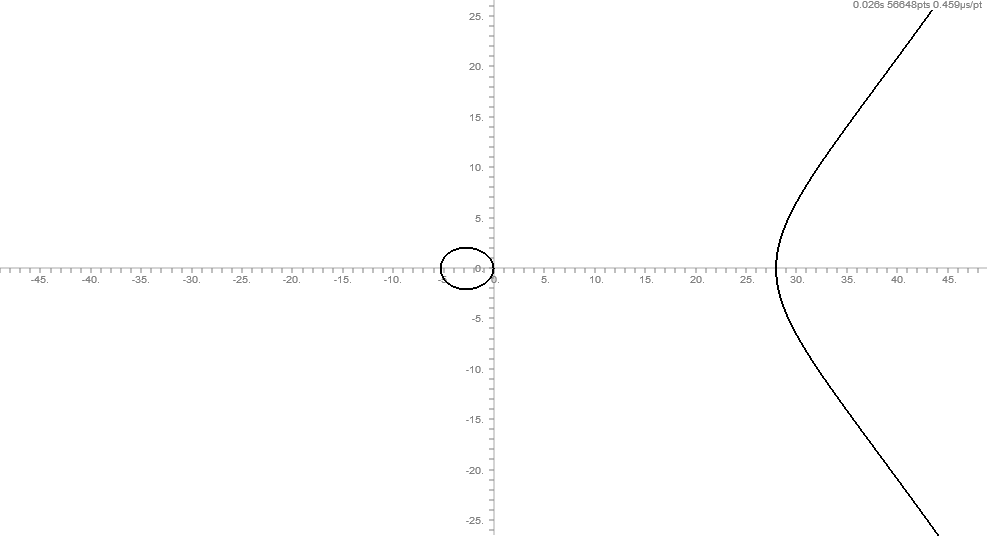}
		\caption{$d=3$, $f=y^2-\frac{2}{100}x^3+\frac{45}{100}x^2+\frac{303}{100}x+\frac{29}{100}$ which has $c=1$ oval and $t=3$ real eigenvectors}
	\label{fig:96}
\end{figure}

\begin{proposizione} \label{prop:100}
Let $c\in\left\{0,1,2 \, nested,2 \, non \, nested,3,4\right\}$ and let $t$ be odd such that $\max(3,2c+1)\leq t\leq13$. Then the set $$\left\{f\in\Sym^4(\R^3)\,|\, f \; has \; c \; ovals,\, \# real \; eigenvectors \; of \; f=t\right\}$$ has positive volume.
\end{proposizione}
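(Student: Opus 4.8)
The plan is to argue exactly as in the proof of Proposition~\ref{prop:99}, replacing the two topological types available for cubics by the six available for quartics. By Remark~\ref{oss:connessione}, for $d=4$ the locus $U_c\subseteq\Sym^4(\R^3)$ of nonsingular quartics of a fixed topological type --- i.e.\ with exactly $c$ ovals arranged as prescribed, the case $c=2$ being understood with its nesting datum --- is connected; it is moreover open, since nonsingularity is an open condition and the isotopy type of a nonsingular plane curve is locally constant. For $f\in U_c$ the map $\nabla f$ is a morphism of $\PP^2(\C)$ (Remark~\ref{oss:1}) whose fixed points are the eigenvectors of $f$, finitely many and $13$ in number counted with multiplicity (Theorem~\ref{teo:1}, $d=4$); the only obstruction to their all being distinct is the eigendiscriminant $D\subseteq U_c$, a real algebraic hypersurface. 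On $U_c\setminus D$ the $13$ eigenvectors depend continuously on $f$ without ever colliding, so none of them can pass from real to complex: the function $t(f)=\#\{\text{real eigenvectors of }f\}$ is locally constant on $U_c\setminus D$, and by Theorem~\ref{teo:4} together with the parity of $13$ it takes only odd values in $[\max(3,2c+1),13]$.

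It therefore suffices to exhibit, for each of the six types, one form $f^-\in U_c\setminus D$ with $t(f^-)=\max(3,2c+1)$ and one form $f^+\in U_c\setminus D$ with $t(f^+)=13$, and then to run the following filling argument. Join $f^-$ to $f^+$ by a path inside the connected open set $U_c$ and perturb it to be generic; it then meets $D$ transversally in finitely many points, at each of which exactly one pair of eigenvectors collides, the transverse local model $z^2=s$ showing that $t$ jumps by exactly $\pm2$. By the discrete intermediate value theorem $t$ attains every value $\max(3,2c+1)+2j$ with $0\le j\le\bigl(13-\max(3,2c+1)\bigr)/2$ --- that is, every admissible odd value --- at some point of $U_c\setminus D$; as $t$ is locally constant there and $U_c\setminus D$ is open, each such value is attained on a subset of positive volume, all of whose members still have $c$ ovals because $U_c$ is open.

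For $f^+$ (value $13$) I would invoke Remark~\ref{oss:drette}: a product $l_1l_2l_3l_4$ of four general real lines has all $13$ eigenvectors real and of multiplicity one, a small perturbation $l_1l_2l_3l_4+\epsilon g$ is a nonsingular quartic still with $13$ real eigenvectors, and the topological type of its zero locus is prescribed by the signs of $g$ at the six nodes $P_{i,j}$; letting these signs vary one realizes each of the six real schemes of a nonsingular quartic (the empty one, one oval, two non-nested ovals, two nested ovals, three ovals and the $M$-quartic with four ovals, this last being the coherent smoothing of the quadrilateral). For $f^-$ (value $\max(3,2c+1)$) I would produce explicit quartics, one in each topological type, just as the Weierstrass cubics were produced in Proposition~\ref{prop:99}, and certify by the Eigenvectors Method in Macaulay2 (Remark~\ref{oss:2}) that among their $13$ eigenvectors exactly $\max(3,2c+1)$ are real; these examples, displayed in the figures below, are generic and hence lie in $U_c\setminus D$.

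The hard part is precisely the construction of these extremal examples, and above all checking their compatibility with \emph{every} topological type: on the upper side one must verify that all six real schemes occur among the smoothings of a generic arrangement of four lines (the nested pair $1\langle1\rangle$ is the delicate case), and on the lower side one must exhibit, for each admissible $c$, a nonsingular quartic with $c$ ovals meeting the bound of Theorem~\ref{teo:4} with equality --- the genuinely computational core of the statement, whose carrying out I would delegate to Macaulay2 as above.
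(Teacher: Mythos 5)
Your overall strategy is the same as the paper's: fix the topological type, invoke the connectedness statement of Remark \ref{oss:connessione}, observe that the number $t$ of real eigenvectors can only change by an even amount along a path, and thereby reduce the proposition to exhibiting, in each of the six classes, one quartic with $t=\max(3,2c+1)$ and one with $t=13$. You actually make the intermediate-value mechanism (eigendiscriminant, local constancy of $t$, jumps of $\pm2$) more explicit than the paper does, which is a genuine improvement in rigor over the paper's terse appeal to Remark \ref{oss:connessione}.

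There is, however, a concrete error in your construction of the maximal examples $f^+$. You claim that by varying the signs of $g$ at the six nodes of a generic arrangement of four real lines one realizes all six real schemes of a nonsingular quartic. This fails for two of them. A small perturbation $l_1l_2l_3l_4+\epsilon g$ always has nonempty real zero locus ($f$ changes sign across each line away from the nodes, so real zeros persist), hence the empty scheme $c=0$ cannot arise this way; and the bounded cells of a generic four-line arrangement are pairwise adjacent, never one inside another, so no smoothing yields two nested ovals. The paper covers exactly these two cases with separate examples: the Fermat quartic $x^4+y^4+z^4$ for $c=0$ and a hyperbolic determinantal quartic $\det(zI+xM_1+yM_2)$ for the nested pair, each verified to have $t=13$. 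You flag the nested case as ``delicate'' but still assert the general claim, and you miss the empty case entirely. In addition, the minimal examples $f^-$ --- which you rightly identify as the computational core --- are never actually produced; the paper supplies six explicit quartics (an SOS form for $c=0$, a quartic with one oval, a product of two conics for two non-nested ovals, a determinantal quartic for two nested ovals, the Ciani quartic for three ovals, and a perturbation of a reducible quartic for four ovals) together with their certified eigenvector counts. Without those examples, and without a correct source of maximal examples in the empty and nested cases, the proof is incomplete.
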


\begin{proof}
By Remark \ref{oss:connessione}, we must show examples of ternary quartic forms such that $c\in\left\{0,1,2 \, nested,2 \, non \, nested,3,4\right\}$ and $t$ attains the maximum and the minimum value. We have the following examples:
\begin{itemize}
	\item $t$ maximum. By Remark \ref{oss:drette}, we can take $f=xy(x+y+\frac{1}{3})(-3x+y+1)$, $\epsilon=\frac{1}{1000}$, $g_1=x^4+y^4-1$, $g_2=-x^4-y^4+\frac{5}{2}$, $g_3=7x^4+6y^4-1-5x$ and $g_4=7x^4+6y^4-1-5x-9y$ to obtain $f_1=f+\epsilon g_1$, $f_2=f+\epsilon g_2$, $f_3=f+\epsilon g_3$ and $f_4=f+\epsilon g_4$ with, respectively, $4$, $3$, $2$ non nested and $1$ ovals and $13$ real eigenvectors (see Figures \ref{fig:81}, \ref{fig:82}, \ref{fig:83}, \ref{fig:84}, \ref{fig:87}). Moreover, we can take the hyperbolic quartic $f_5=\det(I+xM_1+yM_2)$, where
\onehalfspacing{$$
M_1=\left(
\begin{array}{cccc}
\frac{2}{9} & 5 & 10 & \frac{7}{4} \\
5 & 1 & 1 & \frac{3}{8} \\
10 & 1 & \frac{1}{2} & \frac{1}{2} \\
\frac{7}{4} & \frac{3}{8} & \frac{1}{2} & \frac{5}{3}
\end{array}
\right), \,
M_2=\left(
\begin{array}{cccc}
\frac{1}{2} & 1 & \frac{1}{2} & \frac{4}{5} \\
1 & 8 & \frac{1}{3} & 8 \\
\frac{1}{2} & \frac{1}{3} & \frac{1}{3} & 8 \\
\frac{4}{5} & 8 & 8 & \frac{7}{8}
\end{array}
\right)
$$}
are symmetric matrices, with $2$ nested ovals and $t=13$ (see Figure \ref{fig:90}) and the Fermat quartic $f_6=x^4+y^4+1$ with $0$ ovals and $t=13$.
  \item $t$ minimum. Then we have:
\begin{itemize}
	\item $f$ has $0$ ovals. In this case, we can find the SOS (sum of squares) form \onehalfspacing{$f=q_1^2+q_2^2+q_3^2=(6x^2+\frac{9}{8}xy+\frac{4}{9}y^2+\frac{1}{6}x+\frac{2}{9}y+\frac{4}{9})^2+(4x^2+\frac{1}{2}xy+\frac{7}{9}y^2+\frac{6}{7}x+\frac{3}{4}y+2)^2+(\frac{7}{3}x^2+\frac{2}{5}xy+\frac{1}{10}y^2+x+\frac{1}{2}y+\frac{1}{5})^2$} with $3$ real eigenvectors.
	\item $f$ has $1$ oval. In this case, we can find the form \onehalfspacing{$f=\frac{9}{5}x^4+\frac{4}{5}x^3y+\frac{1}{3}x^2y^2+\frac{4}{9}xy^3+\frac{5}{4}y^4+x^3+\frac{8}{7}x^2y+\frac{8}{5}xy^2+\frac{1}{5}y^3+x^2+\frac{3}{8}xy+2y^2+\frac{5}{2}x+\frac{5}{9}y+\frac{3}{10}$} (see Figure \ref{fig:88}) with $3$ real eigenvectors.
	\item $f$ has $2$ ovals non nested. In this case, we can find the form $f=q_1q_2=(8x^2+3y^2-\frac{1}{10}xy+3x-10y-9)(7x^2+3y^2+5xy-7x+12y+15)$ (see Figure \ref{fig:86}) with $5$ real eigenvectors.
	\item $f$ has $2$ nested ovals. In this case, we can find the determinantal form $f=\det(I+xN_1+yN_2)$ (see Figure \ref{fig:91}), where 
\onehalfspacing{$$
N_1=\left(
\begin{array}{cccc}
\frac{5}{2} & \frac{5}{3} & 2 & \frac{9}{10} \\
\frac{5}{3} & \frac{7}{2} & \frac{1}{4} & \frac{2}{5} \\
2 & \frac{1}{4} & \frac{10}{7} & \frac{1}{3} \\
\frac{9}{10} & \frac{2}{5} & \frac{1}{3} & 1
\end{array}
\right), \,
N_2=\left(
\begin{array}{cccc}
\frac{4}{5} & \frac{5}{3} & 1 & \frac{5}{8} \\
\frac{5}{3} & \frac{1}{2} & 1 & 1 \\
1 & 1 & 2 & \frac{8}{7} \\
\frac{5}{8} & 1 & \frac{8}{7} & \frac{10}{7}
\end{array}
\right)
$$}
are symmetric matrices, with $5$ real eigenvectors.
  \item $f$ has $3$ ovals. In this case, we have the quartic $f=(x^2+y^2)^2+p(x^2+y^2)+q(x^3-3xy^2)+r$, where $p=\frac{16}{3}$, $q=\frac{80}{9}$, $r=\frac{2624}{9}$ in Figure \ref{fig:85} (see \cite{EC}, pag. 116, 123), with $7$ real eigenvectors.
  \item $f$ has $4$ ovals. In this case, we have the singular form $f=(y^2-\frac{2}{100}x^3+\frac{45}{100}x^2+\frac{303}{100}x+\frac{29}{100})(x-45)$, with $9$ real eigenvectors and then we can perturb $f$ by $\epsilon g$, where $g$ is a quartic such that $f_6=f+\epsilon g$ has $4$ ovals and $\epsilon$ is small enough, to obtain a form with $c=4$ and again $t=9$; we can take $\epsilon=\frac{1}{1000}$ and $g=-x^4-y^4-1$ (see Figures \ref{fig:97}, \ref{fig:98}).
\end{itemize}
\end{itemize}
\end{proof}

\begin{figure}[H]
	\centering
		\includegraphics[width=10cm,height=4cm]{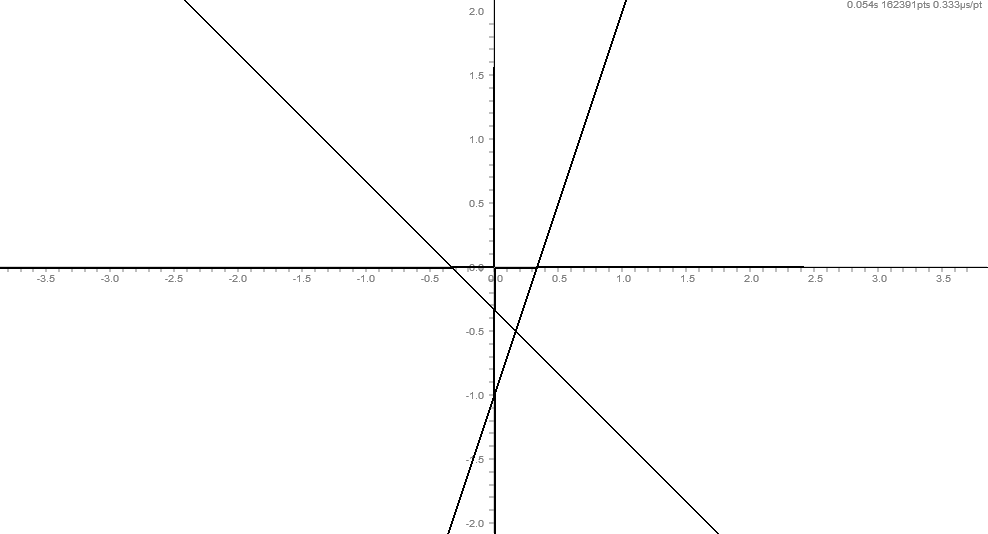}
		\caption{$d=4$, $f=xy(x+y+\frac{1}{3})(-3x+y+1)$}
	\label{fig:81}
\end{figure}
\begin{figure}[H]
	\centering
		\includegraphics[width=10cm,height=6cm]{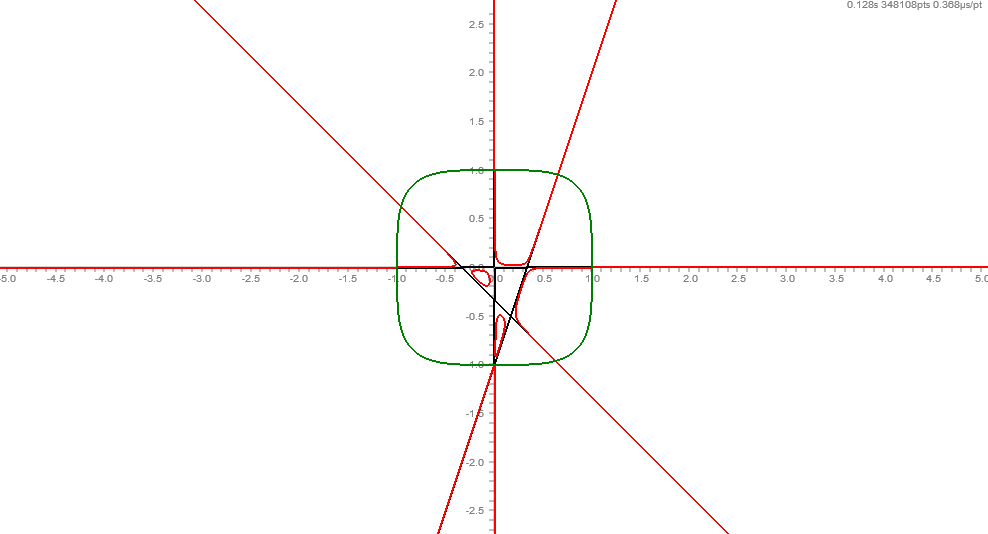}
		\caption{$d=4$, $f=xy(x+y+\frac{1}{3})(-3x+y+1)$, $g_1=x^4+y^4-1$ which is negative on the six singular points of $f$, $f_1=f+\frac{1}{1000}g_1$ which has $4$ ovals}
	\label{fig:82}
\end{figure}
\begin{figure}[H]
	\centering
		\includegraphics[width=10cm,height=6cm]{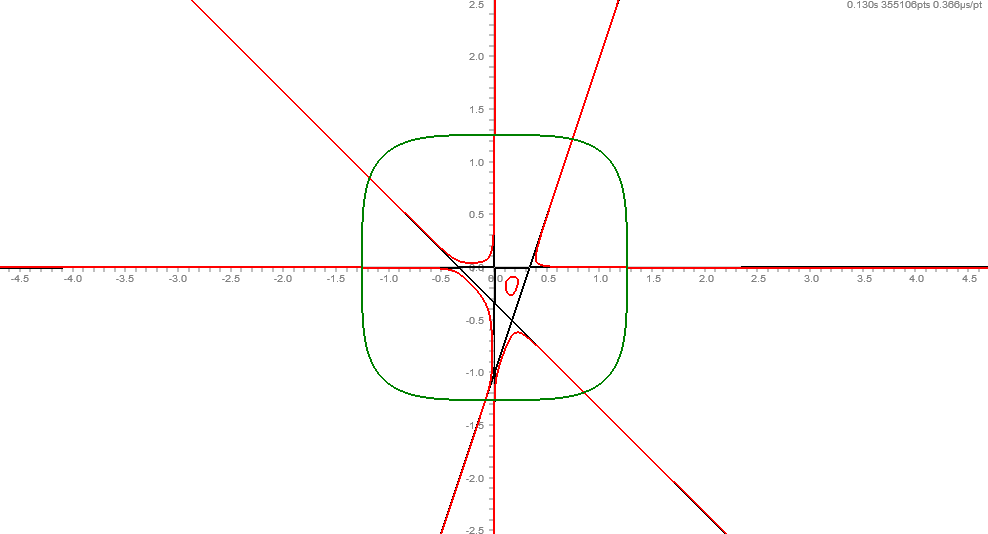}
		\caption{$d=4$, $f=xy(x+y+\frac{1}{3})(-3x+y+1)$, $g_2=-x^4-y^4+\frac{5}{2}$ which is positive on the six singular points of $f$, $f_2=f+\frac{1}{1000}g_2$ which has $3$ ovals}
	\label{fig:83}
\end{figure}
\begin{figure}[H]
	\centering
		\includegraphics[width=10cm,height=6cm]{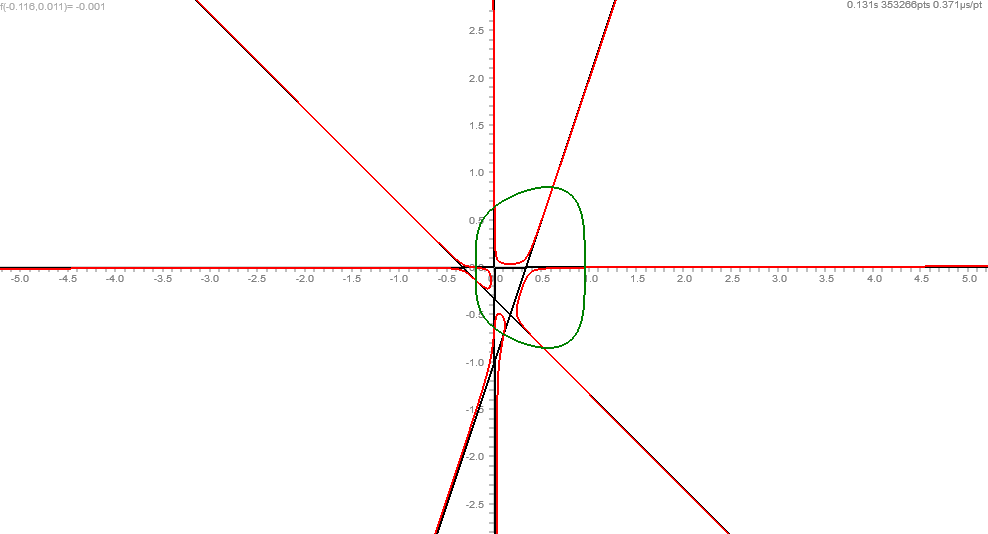}
		\caption{$d=4$, $f=xy(x+y+\frac{1}{3})(-3x+y+1)$, $g_3=7x^4+6y^4-1-5x$ which is negative on four of the six singular points of $f$ and it is positive on the other two, $f_3=f+\frac{1}{1000}g_3$ which has $2$ non nested ovals}
	\label{fig:84}
\end{figure}
\begin{figure}[H]
	\centering
		\includegraphics[width=10cm,height=6cm]{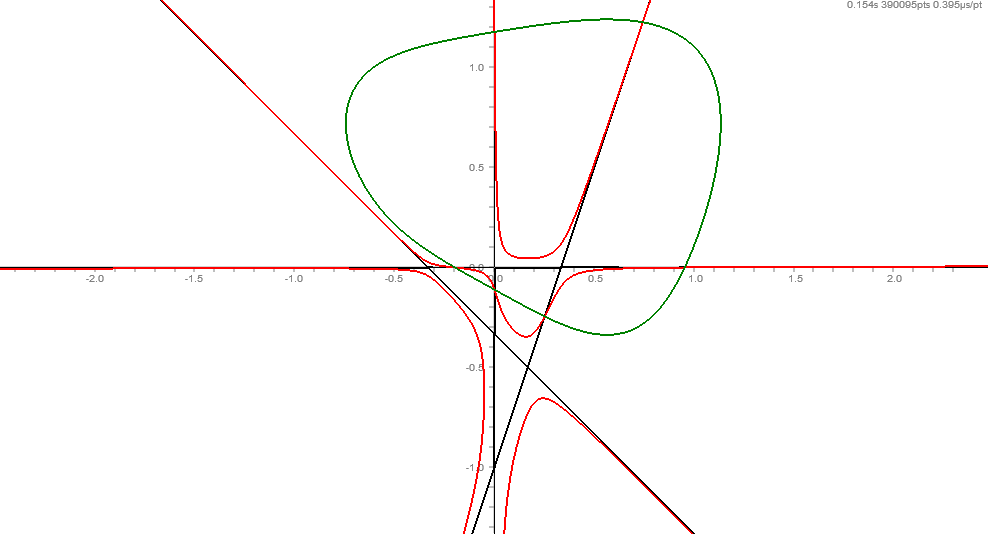}
		\caption{$d=4$, $f=xy(x+y+\frac{1}{3})(-3x+y+1)$, $g_4=7x^4+6y^4-1-5x-9y$ which is positive on four of the six singular points of $f$ and it is negative on the other two, $f_4=f+\frac{1}{1000}g_4$ which has $1$ oval}
	\label{fig:87}
\end{figure}
\begin{figure}[H]
	\centering
		\includegraphics[width=10cm,height=6cm]{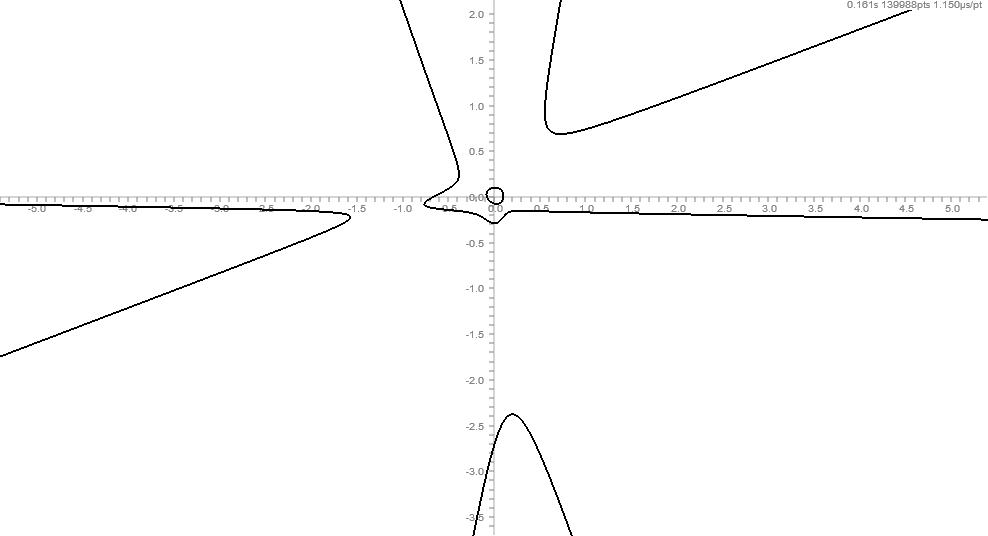}
		\caption{$d=4$, $f_5=\det(I+xM_1+yM_2)$ which has $2$ nested ovals and $13$ real eigenvectors}
	\label{fig:90}
\end{figure}
\begin{figure}[H]
	\centering
		\includegraphics[width=10cm,height=6cm]{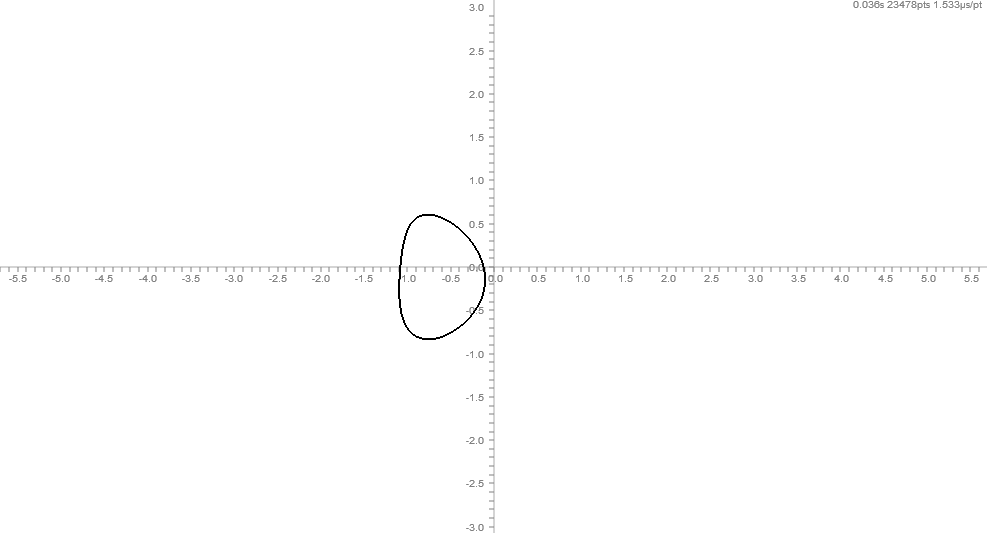}
		\caption{$d=4$, $f=\frac{9}{5}x^4+\frac{4}{5}x^3y+\frac{1}{3}x^2y^2+\frac{4}{9}xy^3+\frac{5}{4}y^4+x^3+\frac{8}{7}x^2y+\frac{8}{5}xy^2+\frac{1}{5}y^3+x^2+\frac{3}{8}xy+2y^2+\frac{5}{2}x+\frac{5}{9}y+\frac{3}{10}$ which has $c=1$ oval and $t=3$ real eigenvectors}
	\label{fig:88}
\end{figure}
\begin{figure}[H]
	\centering
		\includegraphics[width=10cm,height=6cm]{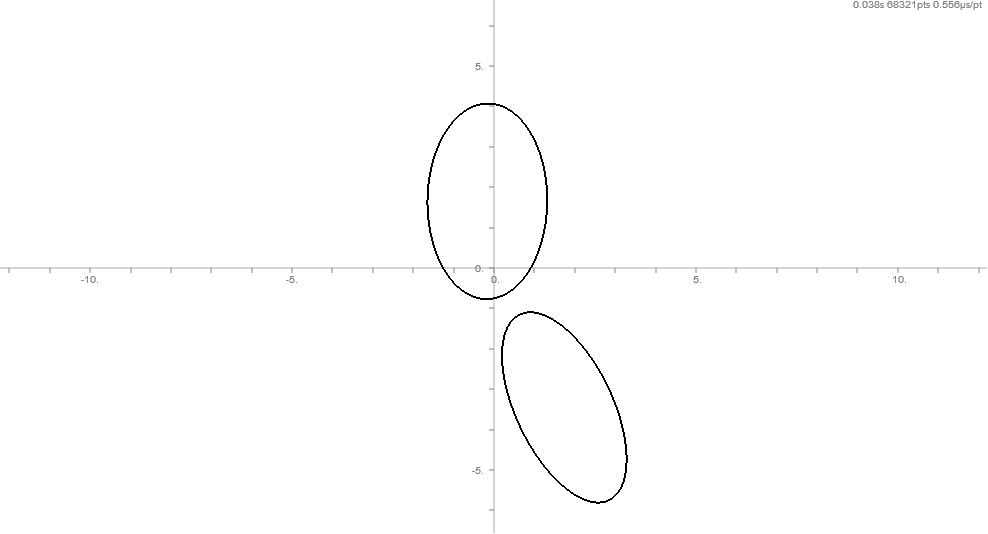}
		\caption{$d=4$, $f=(8x^2+3y^2-\frac{1}{10}xy+3x-10y-9)(7x^2+3y^2+5xy-7x+12y+15)$ which has $c=2$ non nested ovals and $t=5$ real eigenvectors}
	\label{fig:86}
\end{figure}
\begin{figure}[H]
	\centering
		\includegraphics[width=10cm,height=6cm]{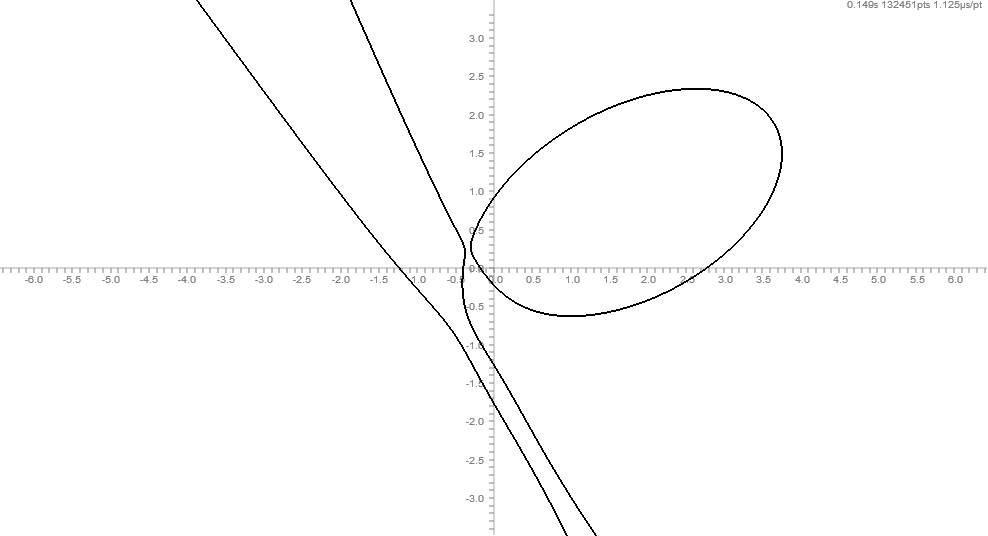}
		\caption{$d=4$, $f=\det(I+xN_1+yN_2)$ which has $c=2$ nested ovals and $t=5$ real eigenvectors}
	\label{fig:91}
\end{figure}
\begin{figure}[H]
	\centering
		\includegraphics[width=10cm,height=6cm]{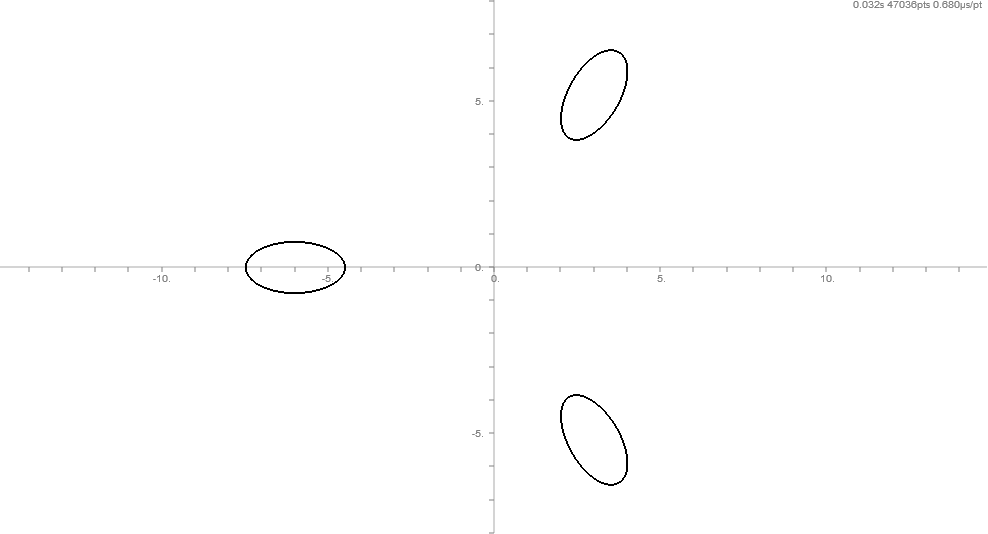}
		\caption{$d=4$, $f=(x^2+y^2)^2+\frac{16}{3}(x^2+y^2)+\frac{80}{9}(x^3-3xy^2)+\frac{2624}{9}$ which has $c=3$ ovals and $t=7$ real eigenvectors}
	\label{fig:85}
\end{figure}
\begin{figure}[H]
	\centering
		\includegraphics[width=10cm,height=4cm]{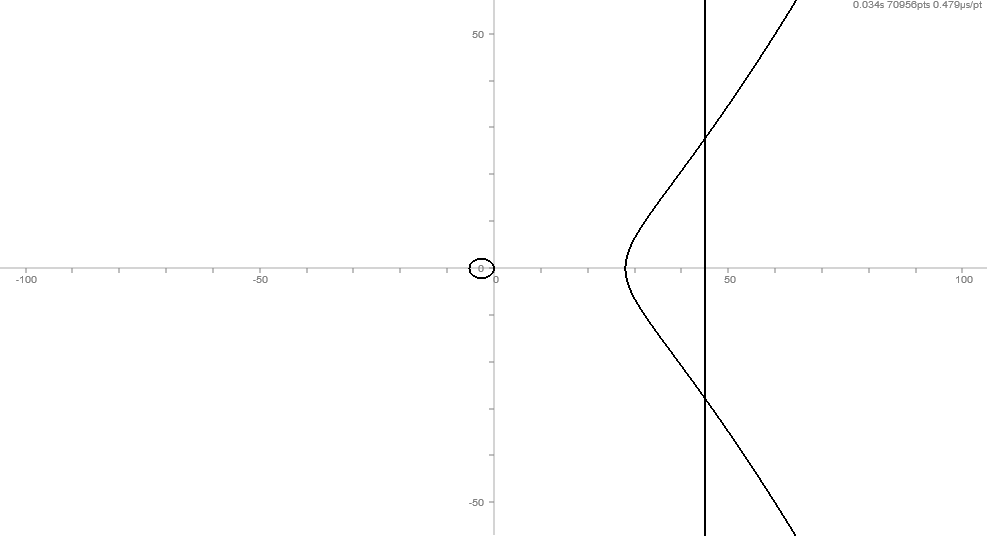}
		\caption{$d=4$, $f=(y^2-\frac{2}{100}x^3+\frac{45}{100}x^2+\frac{303}{100}x+\frac{29}{100})(x-45)$ which has $t=9$ real eigenvectors}
	\label{fig:97}
\end{figure}	
\begin{figure}[H]
	\centering
		\includegraphics[width=10cm,height=4cm]{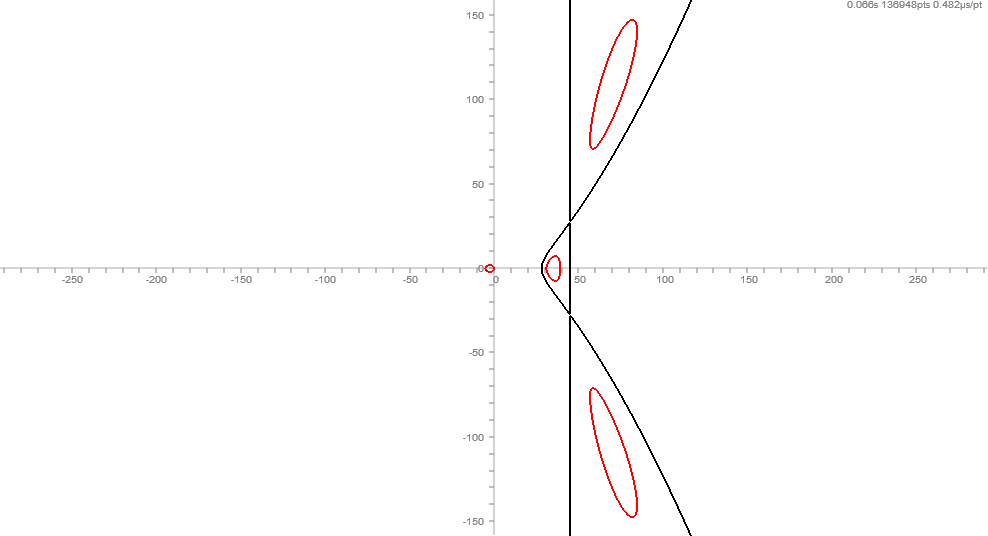}
		\caption{$d=4$, $f=(y^2-\frac{2}{100}x^3+\frac{45}{100}x^2+\frac{303}{100}x+\frac{29}{100})(x-45)$ which has $t=9$ real eigenvectors, $g=-x^4-y^4-1$, $f_6=f+\frac{1}{1000}g$ which has $c=4$ ovals and $t=9$ real eigenvectors}
	\label{fig:98}
\end{figure}

\section{Examples} \label{sez:5}

\begin{osservazione} \em \label{oss:esempi2}
Having fixed the topological type of a ternary quartic $f$, for a sample of $1000$ forms we give the occurrences of all possible values of $t$ in some topological cases:
\begin{enumerate}
	\item $f$ nonnegative, i.e. $c=0$. In this case, we can write $f$ as a sum of squares of $3$ ternary quadratic forms $q_1$, $q_2$, $q_3$ and we have the following table:
\begin{table}[H]
\begin{tabular}{||p{3cm}||*{7}{c|}|} 
\hline
$t$ & $3$ & $5$ & $7$ & $9$ & $11$ & $13$ \\
\hline
\bfseries occurrences & $458$ & $240$ & $215$ & $79$ & $6$ & $2$ \\
\hline
\end{tabular}
\caption{$d=4$}
\end{table}
Note that if $c=0$ all possible numbers of real eigenvectors can occur, also $3$, according to Theorem \ref{teo:4}.
  \item $f$ has one oval, i.e. $c=1$. In this case, we can write $f$ as a product of two quadratic forms $q_1$, $q_2$, where $q_1$ or $q_2$ has empty real locus of zeros and we have the following table:
\begin{table}[H]
\begin{tabular}{||p{3cm}||*{7}{c|}|} 
\hline
$t$ & $3$ & $5$ & $7$ & $9$ & $11$ & $13$ \\
\hline
\bfseries occurrences & $399$ & $397$ & $141$ & $42$ & $16$ & $5$ \\
\hline
\end{tabular}
\caption{$d=4$}
\end{table}
Note that if $c=1$ all possible numbers of real eigenvectors can occur, also $3$, according to Theorem \ref{teo:4}.
  \item $f$ hyperbolic, i.e. $c=2$ and the ovals are nested if $\left\{f=0\right\}$ is smooth in $\PP^2(\C)$. In this case, we can write $f$ as $\det(xI+yM_2+zM_3)$, where $M_i$ are $4\times4$ Hermitian matrices and $I$ is the identity matrix, that is symmetric matrices in this case, because $f$ has real coefficients and we have the following table:
\begin{table}[H]
\begin{tabular}{||p{3cm}||*{7}{c|}|} 
\hline
$t$ & $3$ & $5$ & $7$ & $9$ & $11$ & $13$ \\
\hline
\bfseries occurrences & $0$ & $17$ & $161$ & $315$ & $401$ & $106$ \\
\hline
\end{tabular}
\caption{$d=4$}
\end{table}
Note that if $c=2$ (and the ovals are nested in this case) all possible numbers of real eigenvectors can occur except $3$, according to Theorem \ref{teo:4}.
\end{enumerate}
\end{osservazione}
\begin{osservazione} \label{oss:esempi3} \em
Having fixed the topological type of a ternary sextic $f$, for a sample of $1000$ forms we give the occurrences of all possible values of $t$ in some topological cases:
\begin{enumerate}
  \item $f$ nonnegative, i.e. $c=0$. In this case, we have two possibilities for our form: $f$ is a sum of squares of $4$ ternary cubic forms $q_1$, $q_2$, $q_3$, $q_4$ or not.\\
In the first case, we have the following table:
\begin{table}[H]
\begin{tabular}{||p{3cm}||*{9}{c|}|} 
\hline
$t$ & $3$ & $5$ & $7$ & $9$ & $11$ & $13$ & $15$ & $17$ \\
\hline
\bfseries occurrences & $71$ & $373$ & $33$ & $168$ & $42$ & $11$ & $3$ & $2$ \\
\hline
\end{tabular}
\begin{tabular}{||p{3cm}||*{8}{c|}|} 
\hline
$t$ & $19$ & $21$ & $23$ & $25$ & $27$ & $29$ & $31$ \\
\hline
\bfseries occurrences & $0$ & $0$ & $0$ & $0$ & $0$ & $0$ & $0$ \\
\hline
\end{tabular}
\caption{$d=6$}
\end{table}
In the second case, $f$ is nonnegative but it is not a sum of squares; thus, taking a known sextic with this property, for example $f_1=x^4y^2+x^2y^4+z^6-3x^2y^2z^2$ (the Motzkin's sextic, \cite{R22}), $f_2=x^6+y^6+z^6-x^4y^2-x^2y^4-x^4z^2-y^4z^2-x^2z^4-y^2z^4+3x^2y^2z^2$ (the Robinson's sextic, \cite{R22}), $f_3=x^4y^2+y^4z^2+z^4x^2-3x^2y^2z^2$ (the Choi-Liu's sextic, \cite{R22}), we perturb them without changing their topological type, adding $\epsilon g$, where $g$ is a random SOS sextic. We have the following tables:
\begin{table}[H]
\begin{tabular}{||p{3cm}||*{9}{c|}|} 
\hline
$t$ & $3$ & $5$ & $7$ & $9$ & $11$ & $13$ & $15$ & $17$ \\
\hline
\bfseries occurrences & $0$ & $0$ & $1$ & $11$ & $36$ & $61$ & $200$ & $525$ \\
\hline
\end{tabular}
\begin{tabular}{||p{3cm}||*{8}{c|}|} 
\hline
$t$ & $19$ & $21$ & $23$ & $25$ & $27$ & $29$ & $31$ \\
\hline
\bfseries occurrences & $156$ & $10$ & $0$ & $0$ & $0$ & $0$ & $0$ \\
\hline
\end{tabular}
\caption{$d=6$, $f_1$}
\end{table}
\begin{table}[H]
\begin{tabular}{||p{3cm}||*{9}{c|}|} 
\hline
$t$ & $3$ & $5$ & $7$ & $9$ & $11$ & $13$ & $15$ & $17$ \\
\hline
\bfseries occurrences & $0$ & $0$ & $0$ & $1$ & $2$ & $7$ & $35$ & $28$ \\
\hline
\end{tabular}
\begin{tabular}{||p{3cm}||*{8}{c|}|} 
\hline
$t$ & $19$ & $21$ & $23$ & $25$ & $27$ & $29$ & $31$ \\
\hline
\bfseries occurrences & $47$ & $84$ & $186$ & $610$ & $0$ & $0$ & $0$ \\
\hline
\end{tabular}
\caption{$d=6$, $f_2$}
\end{table}
\begin{table}[H]
\begin{tabular}{||p{3cm}||*{9}{c|}|} 
\hline
$t$ & $3$ & $5$ & $7$ & $9$ & $11$ & $13$ & $15$ & $17$ \\
\hline
\bfseries occurrences & $0$ & $0$ & $0$ & $0$ & $0$ & $0$ & $2$ & $5$  \\
\hline
\end{tabular}
\begin{tabular}{||p{3cm}||*{8}{c|}|} 
\hline
$t$ & $19$ & $21$ & $23$ & $25$ & $27$ & $29$ & $31$ \\
\hline
\bfseries occurrences & $13$ & $20$ & $70$ & $701$ & $173$ & $14$ & $2$ \\
\hline
\end{tabular}
\caption{$d=6$, $f_3$}
\end{table}
  \item $f$ hyperbolic, i.e. $c=3$ and the ovals are nested if $\left\{f=0\right\}$ is smooth in $\PP^2(\C)$. In this case, we can write $f$ as $\det(xI+yM_2+zM_3)$, where $M_i$ are $6\times6$ Hermitian matrices and $I$ is the identity matrix, that is symmetric matrices in this case, because $f$ has real coefficients and we have the following table:
\begin{table}[H]
\begin{tabular}{||p{3cm}||*{9}{c|}|} 
\hline
$t$ & $3$ & $5$ & $7$ & $9$ & $11$ & $13$ & $15$ & $17$ \\
\hline
\bfseries occurrences & $0$ & $0$ & $1$ & $2$ & $11$ & $23$ & $91$ & $174$  \\
\hline
\end{tabular}
\begin{tabular}{||p{3cm}||*{8}{c|}|} 
\hline
$t$ & $19$ & $21$ & $23$ & $25$ & $27$ & $29$ & $31$ \\
\hline
\bfseries occurrences & $261$ & $207$ & $163$ & $49$ & $16$ & $1$ & $1$ \\
\hline
\end{tabular}
\caption{$d=6$}
\end{table}
  \item $f$ is obtained by slightly perturbing six lines, i.e. we perturb the product of six linear forms $l_1,\ldots,l_6$ by adding $\epsilon g$, where $g$ is a random sextic, and we have the following table:
\begin{table}[H]
\begin{tabular}{||p{3cm}||*{9}{c|}|} 
\hline
$t$ & $3$ & $5$ & $7$ & $9$ & $11$ & $13$ & $15$ & $17$ \\
\hline
\bfseries occurrences & $0$ & $0$ & $0$ & $2$ & $7$ & $9$ & $17$ & $35$  \\
\hline
\end{tabular}
\begin{tabular}{||p{3cm}||*{8}{c|}|} 
\hline
$t$ & $19$ & $21$ & $23$ & $25$ & $27$ & $29$ & $31$ \\
\hline
\bfseries occurrences & $49$ & $65$ & $75$ & $97$ & $145$ & $218$ & $281$ \\
\hline
\end{tabular}
\caption{$d=6$}
\end{table}
Note that in this case not all possible numbers of real eigenvectors can occur, because the minimum value of $t$ is not $3$, but $9$.
\end{enumerate}
Then we have the following
\end{osservazione}

\begin{lemmaa}
Let $c\in\left\{0,3 \, nested\right\}$ and let $t$ be odd such that $\max(3,2c+1)\leq t\leq31$. Then the set $$\left\{f\in\Sym^6(\R^3)\,|\, f \; has \; c \; ovals,\, \# real \; eigenvectors \; of \; f=t\right\}$$ has positive volume.
\end{lemmaa}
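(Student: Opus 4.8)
The statement to prove is the sharpness counterpart of Theorem~\ref{teo:4}: the lower bound $\max(3,2c+1)$ supplied there is, for $d=6$ and the two listed topological types, attained on a positive-volume family for \emph{every} admissible odd value of $t$. The natural strategy imitates Propositions~\ref{prop:99} and~\ref{prop:100}, but with one structural difference: the connectedness of the stratum of forms of fixed topological type (Remark~\ref{oss:connessione}) is available only for $d=3,4$, so I cannot exhibit witnesses only for the two extreme values of $t$ and then fill the gap by deforming along the stratum. Instead I must produce a separate witness $f_0\in\Sym^6(\R^3)$ for each odd $t$ in the range, and the computations tabulated in Remark~\ref{oss:esempi3} already furnish them.

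First I would assemble the witnesses from Remark~\ref{oss:esempi3}. For $c=0$: sums of four squares of ternary cubics realize every odd $t$ with $3\le t\le 17$; the perturbations $f_1+\epsilon g$ of the Motzkin sextic realize $t=19$ and $t=21$; the perturbations $f_2+\epsilon g$ of the Robinson sextic realize $t=23$ and $t=25$; and the perturbations $f_3+\epsilon g$ of the Choi--Liu sextic realize $t=27,29,31$. Each of these forms is positive definite (a generic sum of four squares of cubics has empty real zero locus, and adding a small positive-definite term preserves positivity), hence genuinely has $c=0$; and the four ranges together cover every odd $t$ in $[3,31]=[\max(3,2c+1),31]$. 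For $c=3$ nested: the hyperbolic determinantal sextics $\det(xI+yM_2+zM_3)$ with $M_2,M_3$ symmetric realize every odd $t$ with $7\le t\le 31$, where $7=\max(3,2\cdot3+1)$, so again every admissible value is attained.

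Next I would show each witness opens up to a positive-volume set. Delete from $\Sym^6(\R^3)$ the two proper real-algebraic (hence measure-zero) subsets consisting of forms with singular projective curve and of forms with a multiple eigenvector; since none of the families above is contained in either, the witnesses may be chosen in the complement. There $\{f_0=0\}$ is a smooth plane curve, so its number of ovals is locally constant under small perturbation (for the $c=0$ witnesses this is immediate, since $f_0>0$ on the compact set $S^2$); and $f_0$ has the maximal number $(d-1)^2+(d-1)+1=31$ of eigenvectors in $\PP^2(\C)$ (Theorem~\ref{teo:1}), all simple. By the implicit function theorem each simple eigenvector persists as an eigenvector under a small real perturbation of $f_0$, remaining real if it was real and forming, with its conjugate, a conjugate pair if it was not; since $31-t$ is even this accounts for all $31$ eigenvectors, so every $f$ near $f_0$ has exactly $t$ real eigenvectors and $c$ ovals. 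Hence a neighbourhood of $f_0$ lies in the set of the statement, which therefore has positive volume.

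The real obstacle lies not in this last, essentially soft, openness argument but in the absence of a $d=6$ analogue of Remark~\ref{oss:connessione}: the proof is forced to lean on the experimental tables of Remark~\ref{oss:esempi3} to supply a witness for every intermediate value of $t$, and the one genuinely delicate point is to verify that the four families listed for $c=0$ (the sums of four squares, together with the perturbed Motzkin, Robinson and Choi--Liu sextics) and the determinantal family for $c=3$ really do cover, with no gap, every odd $t$ in the prescribed range.
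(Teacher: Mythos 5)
Your proposal matches the paper's (implicit) argument: the lemma is stated immediately after Remark \ref{oss:esempi3} with no separate proof, so the computational tables there constitute the entire justification, and your accounting of which families realise which values of $t$ agrees with those tables (indeed the perturbed Motzkin, Robinson and Choi--Liu families each cover wider ranges than the ones you assign them, so the union covers every odd $t$ in $[3,31]$ for $c=0$, and the hyperbolic determinantal family covers every odd $t$ in $[7,31]$ for $c=3$ nested). Your explicit openness argument via simple eigenvectors and local constancy of the topological type, and your observation that the connectedness statement of Remark \ref{oss:connessione} is only invoked for $d=3,4$ and hence unavailable here, are both correct and are left implicit in the paper.
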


Dipartimento di Matematica e Informatica "`U. Dini"', Università di Firenze, Viale Morgagni 67 A Firenze, Italy.\\
\textbf{Email address: maccioni@math.unifi.it}

\begin{thebibliography}{}

\bibitem{ASS}
H. Abo, A. Seigal and B. Sturmfels, Eigenconfigurations of Tensors, Algebraic and Geometric Methods in Discrete Mathematics, to appear.

\bibitem{MB} 
M. Banchi, Rank and border rank of real ternary cubics, Boll. Unione Mat. Ital. 8 (2015), no. 1, 65-80, 14N05 (14H50 14Q05).

\bibitem{CS} 
J. Bochnak, M. Coste, M. F. Roy, Real Algebraic Geometry, Ergebnisse der Mathematik und ihrer Grenzgebiete (3), 36, Springer-Verlag, Berlin, 1998.

\bibitem{R2} 
A. Boralevi, J. Draisma, E. Horobe\c{t}, E. Robeva, Orthogonal and Unitary Tensor Decomposition from an Algebraic Perspective, arXiv:1505.05729v1 [math.AG] (2015).

\bibitem{PB} 
P. Breiding, The expected number of Z-eigenvalues of a real gaussian tensor, arXiv:1604.03910v1 [math.AG] (2016).

\bibitem{CSS} 
D. Cartwright, B. Sturmfels, The Number of Eigenvalues of a Tensor, Linear Algebra Appl. 438 (2013), no. 2, 942-952. 15A69.

\bibitem{EC} 
E. Ciani, Scritti Geometrici Scelti, Volume primo, Cedam, Padova, 1937.

\bibitem{DH} 
J. Draisma, E. Horobe\c{t}, The average number of critical rank-one approximations to a tensor, Linear Multilinear Algebra, to appear.
\bibitem{DHOST} 
J. Draisma, E. Horobe\c{t}, G. Ottaviani, B. Sturmfels and R. R. Thomas, The Euclidean Distance Degree of an Algebraic Variety, Found. Comput. Math. 16 (2016), no. 1, 99-149.

\bibitem{ACO} 
M. Elkadi, B. Mourrain, Introduction à la résolution de systèmes polynomiaux, Mathématiques et Applications 59, Springer, Berlin, 2007.

\bibitem{FS} 
J.E. Fornaess and N. Sibony, Complex dynamics in higher dimensions, I, Astérisque 222 (1994), 201-231.

\bibitem{H} 
R. Hartshorne, Algebraic Geometry, Graduate Texts in Mathematics, 22, Springer, New York, 1977.

\bibitem{L} 
L.H. Lim, Singular values and eigenvalues of tensors: a variational approach, Proc. IEEE Inter-nat. Workshop on Comput. Advances in Multi-Sensor Adaptive Processing (CAMSAP 2005), 129-132.

\bibitem{JWM2} 
J. W. Milnor, Morse Theory, Princeton Univ. Press, Princeton, 1963.

\bibitem{JWM} 
J. W. Milnor, Topology from the differentiable viewpoint, The University Press of Virginia, Charlottesville, Va. 1965.

\bibitem{BR} 
G. Ottaviani, R. Paoletti, A Geometric Perspective on the Singular Value Decomposition, Rend. Istit. Mat. Univ. Trieste, Volume 47, 107-125, 2015.

\bibitem{CO2} 
D. Plaumann, C. Vinzant, Determinantal representations of hyperbolic plane curves: an elementary approach, J. Symbolic Comput. 57 (2013), 48-60, 14M12 (14H50 15B57).

\bibitem{CO} 
D. Plaumann, B. Sturmfels and C. Vinzant, Quartic Curves and their Bitangents, J. Symbolic Comput. 46 (2011), no. 6, 712-733, 14H50 (14N05).

\bibitem{Q} 
L. Qi, Eigenvalues of a real supersymmetric tensor, J. of Symbolic Comput. 40 (2005), 1302-1324.

\bibitem{R22} 
B. Reznick, On Hilbert's construction of positive polynomials, arXiv:0707.2156v1 [math.AG] (2007).

\bibitem{R} 
B. Reznick, Sums of even powers of real binary forms, Mem. Amer. Math. Soc. 96 (1992), no. 463, viii+155 pp. 11E76 (11P05 52A21).

\bibitem{EL} 
E. Robeva, Orthogonal Decomposition of Symmetric Tensors, SIAM Journal on Matrix Analysis and Applications, 37 (2016), pp. 86-102.

\bibitem{BBS} 
B. Sturmfels, Algorithms in Invariant Theory, Second edition, SpringerWienNewYork, Berkeley, 2008.

\end{thebibliography}
\end{document}